\newtheorem{definition}{Definition}
\newtheorem{theorem}{Theorem}
\newtheorem{lemma}{Lemma}
\newtheorem{corollary}{Corollary}
\newtheorem{remark}{Remark}
\newtheorem{example}{Example}
\definecolor{warmblack}{rgb}{0.0, 0.26, 0.26}
\begin{document}

\begin{frontmatter}
		\title{  Some spectral properties and convergence of the $ (A,q)$-numerical radius and $ (A,q)$-Crawford number }

\author[label1]{Pembe Ipek Al}\ead{ipekpembe@gmail.com}
			
			 \address[label1]{Department of Mathematics, Faculty of Sciences, Karadeniz Technical University,
Trabzon, 61080 Turkey}
\author[label1]{Zameddin I. Ismailov}\ead{ zameddin.ismailov@gmail.com}
			

			 \author[label3]{Fuad Kittaneh}\ead{fkitt@ju.edu.jo}
			
			 \address[label3]{Department of Mathematics, The University of Jordan, Amman, Jordan}
		
		\author[label4]{Satyajit Sahoo}\ead{satyajitsahoo2010@gmail.com, ssahoomath@gmail.com }

		\address[label4]{School of Mathematical Sciences, National Institute of Science Education and Research, Bhubaneswar, India}


		\begin{abstract}
							In this study, some estimates are given for the $ (A,q)$-numerical radius and $ (A,q)$-Crawford number via the $ A$-numerical radius and $ A$-Crawford number for the $ A $-bounded linear operators in any complex semi-Hilbert space, respectively. Then, some evolutions are studied for the tensor product of two operators. Lastly, some convergence properties of the $ (A,q)$-numerical radius and $ (A,q)$-Crawford number, via the $ A$-uniform convergence of operator sequences, are investigated. We also considered several examples to illustrate our results. Finally, a few applications of some operator functions classes are also given.
		\end{abstract}
		
		\begin{keyword}
			Operaor semi-norm, $ (A,q) $-numerical radius, $ (A,q)$-Crawford number.\\
   AMS 2010 Subject Classification: 47A05, 47A12, 47A30, 47A63.
		\end{keyword}
    
	\end{frontmatter}

\noindent

\setcounter{section}{1}
\section*{\normalsize{1. Introduction}}
Throughout this article, $ H $ denotes a complex Hilbert space endowed with the inner product $ \langle \cdot , \cdot \rangle $ and associated norm $ \Vert \cdot \Vert . $ Let $ L(H) $ stand for the $ C^{*} $-algebra of all bounded linear operators acting on $ H. $ The identity operator on $ H $ will be simply denoted by $ I .$ From now on, by an operator we mean a bounded linear operator acting on $ H. $ Recall that an operator $ T $ is called positive, written $ T\geq 0, $ if $ \langle Tx,x \rangle\geq 0 $ for all $ x\in H. $ The square root of a positive operator $ T $ is denoted by $ T^{1/2}. $ We denote by $ \vert T \vert= (T^{*}T)^{1/2} $ the absolute value of an operator $ T. $ Here, $ T^{*} $ denote the adjoint of $ T. $ The Crawford number of an operator $ T $ is given by
$$
c(T)=\inf \lbrace \vert\langle Tx,x\rangle\vert:x\in H, \ \Vert x \Vert=1 \rbrace .
$$
The usual operator norm and the numerical radius of an operator $ T $ are, respectively, defined by
$$
\Vert T \Vert=\sup \lbrace \Vert Tx \Vert: \ x\in H, \  \Vert x \Vert=1 \rbrace
$$
and 
$$
\omega (T) =\sup \lbrace \vert\langle Tx,x\rangle\vert: \ x\in H, \ \Vert x \Vert=1 \rbrace .
$$
An operator $ T $ is called normal if $ T^{*}T=TT^{*}. $ It is well known that the equality $ \Vert T \Vert= \omega (T) $ holds for every normal operator $ T. $ However, in general, the above equality fails to be true for non-normal operators. Notice that the following inequalities hold for every $ T\in L(H): $
\begin{eqnarray}
\label{equ1}
\frac{1}{2}\Vert T \Vert \leq \omega (T) \leq \Vert T \Vert .
\end{eqnarray}
The inequalities (\ref{equ1}) play an important role in the approximation of $ \omega (\cdot ) $, which have been the interest of numerous authors. The reader may consult, for example, \cite{bhunia1, bhunia2, bhunia3, heydarbeygi, kittaneh, qiao} and the references therein or the  books \cite{bhunia4,GAu,KEGustafsonDKMRao1991}. \\
For the rest of this paper, $ A\neq 0 $ will denote a bounded linear positive operator on $ H. $ We are going to consider an additional semi-inner product $ \langle \cdot , \cdot \rangle_{A} $ on $ H $ defined by $ \langle x,y \rangle_{A}=\langle Ax, y \rangle $ for all $ x, y \in H, $ which induces a semi-norm  $ \Vert x \Vert_{A} $ on $ H. $ This makes $ H $ into a semi-Hilbert space. After that, we replace the operator norm with the following operator semi-norm
$$
\Vert T \Vert_{A}=\sup \lbrace \Vert Tx \Vert_{A}: \ x\in H, \ \Vert x \Vert_{A}=1 \rbrace 
$$
for an $ A $-bounded operator $ T, $ i. e. an operator $ T $ which satisfies $ \Vert Tx \Vert_{A}\leq c \Vert x \Vert_{A} $ for all $ x\in H $ and some constant $ c>0. $ Let $ L^{A}(H) $ stand for all  $ A$-bounded linear operators acting on $ H. $ \\
Recently, several generalizations for the concept of the numerical radius have been introduced (see \cite{sheybani} and references therein).  One of these generalizations is the so-called $ A- $numerical radius of an operator $ T\in L^{A}(H), $ which was firstly introduced by Saddi in \cite{saddi} as 
$$
\omega _{A}(T)=\sup \lbrace \vert \langle Tx, x\rangle_{A} \vert :x\in H, \Vert x \Vert_{A}=1 \rbrace.
$$
The related $ A $-Crawford number is defined as
$$
c _{A}(T)=\inf \lbrace \vert \langle Tx, x\rangle_{A} \vert :x\in H, \Vert x \Vert_{A}=1 \rbrace.
$$
Note that, it may happen that $ \omega_{A}(T)=+\infty  $ for some $ T\in L^{A}(H) $ (see \cite{baklouti}). Several results on the $ A$-numerical  radius have been established by many mathematicians. See, e.g. \cite{bhunia5, bhunia6, conde, FS, KITSAT, MMJ, PR, NSD, Nirmal2, SS, Zam} and references therein. \\
For $ q\in \mathbb{C}, \ \vert q \vert \leq 1, $ the $ q $-numerical range is defined as
$$
W_{q}(T)=\lbrace \langle Tx,y\rangle : \Vert x \Vert = \Vert y \Vert =1, \ \langle x,y \rangle =q \rbrace 
$$
(see \cite{li, marcus, rajic, stampfli}). \\
Recall that for $ T\in L(H), \  q\in \mathbb{C}, \ \vert q \vert \leq 1,  $ the $ q$-numerical radius and $ q$-Crawford number are defined as
\begin{eqnarray}
\omega_{q}(T)=\sup\lbrace \vert \lambda \vert: \lambda \in W_{q}(T) \rbrace , \ c_{q}(T)=\inf\lbrace \vert \lambda \vert: \lambda \in W_{q}(T) \rbrace , \nonumber
\end{eqnarray}
respectively. \\
Motivated by this, we introduce a new definition. 
\begin{definition}
For the operator $ T\in L^{A}(H), \ q\in \mathbb{C}, \ 0< \vert q \vert \leq 1, $ the following set
$$
W_{A,q}(T):= \left\lbrace \langle Tx,y \rangle_{A}: \ \Vert x \Vert_{A}= \Vert y \Vert_{A}=1, \ \langle x,y\rangle_{A}=q \right\rbrace 
$$
is called the $ (A,q)$-numerical range of the operator $ T $. Also, the numbers
$$
\omega _{A,q}(T)=\sup \lbrace \vert \lambda \vert : \lambda \in W_{A,q}(T) \rbrace
$$
and
$$
c_{A,q}(T)=\inf \lbrace \vert \lambda \vert : \lambda \in W_{A,q}(T) \rbrace,
$$
are called the $ (A,q)$-numerical radius and $ (A,q) $-Crawford number of the operator $ T $, respectively. \\
\end{definition}
In this case it is clear that
\begin{eqnarray}
& & W_{A,1}(T)=W_{A}(T), \ W_{I,1}(T)=W(T), \nonumber \\
& & \omega_{A,1}(T)=\omega_{A}(T), \ c_{A,1}(T)=c_{A}(T), \nonumber \\
& & \omega_{I,1}(T)=\omega(T), \ c_{I,1}(T)=c(T), \nonumber \\
& & W_{I,q}(T)=W_{q}(T), \ \omega_{I,q}(T)=\omega_{q}(T), \ c_{I,q}(T)=c_{q}(T). \nonumber 
\end{eqnarray}
Throughout this paper, the $ (A,q)$-numerical gap is the difference between the $ A $-operator semi-norm and $ (A,q) $-numerical radius, which is defined as 
$$
g_{\omega_{A,q}}(T):=\Vert T \Vert_{A} - \omega_{A,q}(T)\geq 0 \ \text{(see, Theorem \ref{thm1} (1))}
$$
for $ T\in L^{A}(H), \ q\in \mathbb{C}, \ 0< \vert q \vert \leq 1. $ \\
Also, the $ (A,q) $-Crawford gap is the difference between the $ A $-operator semi-norm and $ (A,q) $-Crawford number, which is defined as 
$$
g_{c_{A,q}}(T):=\Vert T \Vert_{A} - c_{A,q}(T)
$$
for $ T\in L^{A}(H), \ q\in \mathbb{C}, \ 0< \vert q \vert \leq 1. $ \\
In this study, the contents are as follows. In Section 2, some estimates are given for the $ (A,q) $-numerical radius and $ (A,q) $-Crawford number via the $ A $-numerical radius and $ A$-Crawford number, respectively, for the $ A $-bounded linear operators in any complex semi-Hilbert space. Then, some evolutions are studied for the tensor product of two operators. We also consider several examples on Hilbert space and establish $\omega_q(\cdot)$ only for a real number $q\in [0, 1]$ to illustrate our results. In Section 3, some convergence properties of the $ (A,q) $-numerical radius and $ (A,q) $-Crawford number via $ A $-uniformly convergence of operator sequences are investigated. In Section 4, a few applications of some operator functions classes are also given.

\setcounter{section}{2}
\section*{\normalsize{2. Some properties of the $ (A,q)$-numerical radius and $(A,q)$-Crawford number}}
\begin{theorem}
\label{thm1}
For the operator $ T\in L^{A}(H) $ and $ q,\lambda,\mu\in \mathbb{C}, \ 0<\vert q \vert\leq 1, $ the following are true  \\
(1) $ \omega_{A,q}(T)\leq \Vert T \Vert_{A}. $ \\
(2) If  $ \alpha \in \mathbb{C}, \ \vert \alpha \vert =1, $ then $ \omega_{A,q}(\alpha T)=\omega_{A,\alpha q}(T). $ \\
(3) If  $ \alpha \in \mathbb{C}, \ \vert \alpha \vert =1,$ then $ c_{A,q}(\alpha T)=c_{A,\alpha q}(T). $ \\
(4) If $  \gamma =\sqrt{\vert \lambda\vert^{2}+ \vert \mu\vert^{2}+ 2Re (\lambda \overline{\mu} q)}\neq 0, $ then $ \gamma \omega_{A,\frac{\lambda + \mu \overline{q}}{\gamma}} (T)\leq \vert \lambda\vert\omega_{A}(T)+\vert \mu\vert\omega_{A,q}(T^{*}). $  \\
(5) If $  \gamma =\sqrt{\vert \lambda\vert^{2}+ \vert \mu\vert^{2}+ 2Re (\lambda \overline{\mu} q)}\neq 0, $ then $ \gamma c_{A,\frac{\lambda + \mu \overline{q}}{\gamma} }(T)\leq \vert \lambda\vert\omega_{A}(T)+\vert \mu\vert c_{A,q}(T^{*}). $\\
(6) For $ x,y \in H, \ \Vert x \Vert_{A}=\Vert y \Vert_{A}=1, \ \langle x,y \rangle_{A}=q, \ 0<\vert q \vert \leq 1 $, it is true $$
\Vert x\pm y \Vert_{A}=\sqrt{2}\sqrt{1\pm Req}.
$$ 
(7) $ \omega_{A}(T)\leq \omega_{A, q}(T)+ \sqrt{2(1-Req)}\omega_{A,\frac{1-q}{\sqrt{2(1-Req)}}}(T), \ q\neq 1. $ \\
(8) $ c_{A}(T)\leq c_{A, q}(T)+ \sqrt{2(1-Req)}\omega_{A,\frac{1-q}{\sqrt{2(1-Req)}}}(T), \ q\neq 1 . $
\end{theorem}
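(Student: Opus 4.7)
The plan is three-phased: first dispatch (1)--(3) and (6) by direct computation, then prove (4) and (5) via an auxiliary-pair parametrization, and finally derive (7) and (8) from (6). Part (1) is the Cauchy--Schwarz inequality in $\langle\cdot,\cdot\rangle_A$: for admissible $(x,y)$, $|\langle Tx,y\rangle_A|\leq\|Tx\|_A\|y\|_A\leq\|T\|_A$. For (2) and (3), the substitution $y\mapsto\overline{\alpha}y$ preserves $\|\cdot\|_A$ (since $|\alpha|=1$) and carries $\langle x,y\rangle_A=\alpha q$ to $\langle x,\overline{\alpha}y\rangle_A=q$; the induced bijection $W_{A,\alpha q}(T)\leftrightarrow W_{A,q}(\alpha T)$ matches sups and infs. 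Part (6) follows from $\|x\pm y\|_A^{2}=\|x\|_A^{2}+\|y\|_A^{2}\pm 2\operatorname{Re}\langle x,y\rangle_A=2\pm 2\operatorname{Re}q$.

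For (4), the plan is to parametrize each admissible pair $(x,y)$ for $W_{A,(\lambda+\mu\overline{q})/\gamma}(T)$ by an auxiliary pair $(u,v)$ with $\|u\|_A=\|v\|_A=1$ and $\langle u,v\rangle_A=\overline{q}$, via $x:=u$, $y:=(\overline{\lambda}u+\overline{\mu}v)/\gamma$. The choice of $\gamma$ forces $\|y\|_A=1$, and $\langle x,y\rangle_A=(\lambda+\mu\overline{q})/\gamma$; the map is actually a bijection onto all admissible pairs, with inverse $u=x$, $v=(\gamma y-\overline{\lambda}x)/\overline{\mu}$. Expanding
\[
\gamma\langle Tx,y\rangle_A=\lambda\langle Tu,u\rangle_A+\mu\langle Tu,v\rangle_A
\]
and applying the triangle inequality with $|\langle Tu,u\rangle_A|\leq\omega_A(T)$ and $|\langle Tu,v\rangle_A|=|\langle T^{*}v,u\rangle_A|\leq\omega_{A,q}(T^{*})$ (using $\langle v,u\rangle_A=q$ together with the $A$-adjoint identity), then taking the supremum over $(u,v)$, yields (4). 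For (5), replace the supremum by a minimizing sequence $(u_n,v_n)$ for $c_{A,q}(T^{*})$, form the corresponding $(x_n,y_n)$, use $c_{A,(\lambda+\mu\overline{q})/\gamma}(T)\leq|\langle Tx_n,y_n\rangle_A|$, apply the same triangle bound, and let $n\to\infty$.

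For (7), fix an $A$-unit $x$ and choose $y$ with $\|y\|_A=1$, $\langle x,y\rangle_A=q$ (take $y=\overline{q}x$ when $|q|=1$, and $y=\overline{q}x+\sqrt{1-|q|^{2}}\,w$ with $w$ an $A$-unit vector $A$-orthogonal to $x$ when $|q|<1$). Decompose $\langle Tx,x\rangle_A=\langle Tx,y\rangle_A+\langle Tx,x-y\rangle_A$; the first term is bounded by $\omega_{A,q}(T)$, and (6) gives $\|x-y\|_A=\sqrt{2(1-\operatorname{Re}q)}\neq 0$ (this is where $q\neq 1$ enters). Setting $z:=(x-y)/\|x-y\|_A$ makes $\langle x,z\rangle_A=(1-q)/\sqrt{2(1-\operatorname{Re}q)}$, so the second term is bounded by $\sqrt{2(1-\operatorname{Re}q)}\,\omega_{A,(1-q)/\sqrt{2(1-\operatorname{Re}q)}}(T)$; taking the sup over $x$ completes (7). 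Item (8) is the inf-analog: pick $(x_n,y_n)$ with $|\langle Tx_n,y_n\rangle_A|\to c_{A,q}(T)$, apply the same decomposition, use $c_A(T)\leq|\langle Tx_n,x_n\rangle_A|$, and let $n\to\infty$. The main obstacle I anticipate is the correct parametrization in (4)--(5): the naive choice $x=(\lambda u+\mu v)/\gamma$, $y=u$ fails to surject onto all admissible pairs unless $\lambda\in\mathbb{R}$, which is why the conjugated-coefficient form is needed; relatedly, to match the right-hand side one must interpret $T^{*}$ as the $A$-adjoint so that the identity $|\langle Tu,v\rangle_A|=|\langle T^{*}v,u\rangle_A|$ holds.
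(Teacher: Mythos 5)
Your proof is correct and follows essentially the same route as the paper: Cauchy--Schwarz for (1), a one-slot unimodular rescaling for (2)--(3), the linear-combination pair built from $\lambda,\mu$ and normalized by $\gamma$ for (4)--(5), the norm expansion for (6), and the decomposition $\langle Tx,x\rangle_A=\langle Tx,y\rangle_A+\langle Tx,x-y\rangle_A$ for (7)--(8). The extra care you take --- verifying that the parametrized pairs exhaust all admissible pairs for the target value of $q$ in (4), constructing an admissible partner $y$ for every $A$-unit vector $x$ in (7), using minimizing sequences for the Crawford-number statements, and reading $T^{*}$ as the $A$-adjoint so that $\vert\langle Tu,v\rangle_A\vert=\vert\langle T^{*}v,u\rangle_A\vert$ --- supplies exactly the details the paper's proof leaves implicit.
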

\begin{proof}
(1) For $ x, y \in H, $ $ \Vert x \Vert_{A}=\Vert y \Vert_{A}=1, \ \langle x,y \rangle_{A}=q, $ we have 
\begin{eqnarray}
\vert \langle Tx,y \rangle_{A}\vert & = & \vert \langle ATx,y \rangle \vert \nonumber \\
& \leq & \vert \langle ATx,Tx \rangle^{1/2} \vert \vert \langle Ay,y \rangle^{1/2} \vert \nonumber \\
& = & \vert \langle Tx,Tx \rangle_{A}^{1/2} \vert \vert \langle y,y \rangle_{A}^{1/2} \vert \nonumber \\
& = & \Vert Tx \Vert_{A} \Vert y \Vert_{A} \nonumber \\
& = & \Vert Tx \Vert_{A}.  \nonumber
\end{eqnarray}
Hence, it is obtained that
$$
\omega_{A,q}(T)\leq \Vert T \Vert_{A}.
$$
(2)-(3) These equalities are results of the following relations 
$$
\Vert \alpha x \Vert_{A}=\vert \alpha \vert \Vert  x \Vert_{A}=1,
$$
$$
\langle \alpha x,y \rangle_{A}=\alpha \langle x,y \rangle_{A}=\alpha q, \ 0<\vert \alpha q \vert= \vert q \vert \leq 1.
$$
(4)-(5) For the operator $ T\in L^{A}(H) $ and $ \lambda,\mu\in \mathbb{C},$ the following relation
\begin{eqnarray}
\vert \langle T(\lambda x+\mu y),x\rangle_{A} \vert &\leq & \vert \lambda\vert \vert \langle Tx,x\rangle_{A} \vert +  \vert \mu\vert \vert \langle Ty,x\rangle_{A} \vert  \nonumber \\
& = & \vert \lambda\vert \vert \langle Tx,x\rangle_{A} \vert +  \vert \mu\vert \vert \langle T^{*}x,y\rangle_{A} \vert \nonumber
\end{eqnarray}
is true. Since for $ \Vert x \Vert_{A}=\Vert y \Vert_{A}=1, \ \langle x,y\rangle_{A}=q, \ 0<\vert q \vert\leq 1, $ 
\begin{eqnarray}
\Vert \lambda x+\mu y \Vert_{A}^{2} & = & \langle A(\lambda x+\mu y), \lambda x+\mu y \rangle \nonumber \\
& = & \langle A(\lambda x), \lambda x \rangle +\langle A(\lambda x), \mu y \rangle +\langle A(\mu y), \lambda x\rangle + \langle A(\mu y), \mu y \rangle \nonumber \\
& = & \vert \lambda \vert^{2} \langle Ax,  x \rangle + \lambda \overline{\mu} \langle Ax,y \rangle + \overline{\lambda}\mu \langle Ay, x \rangle +\vert \mu \vert^{2} \langle Ay, y \rangle \nonumber \\
& = & \vert \lambda \vert^{2} \langle x,  x \rangle_{A} + \lambda \overline{\mu} \langle x,y \rangle_{A} + \overline{\lambda}\mu \langle y, x \rangle_{A} +\vert \mu \vert^{2} \langle y, y \rangle_{A} \nonumber \\
& = & \vert \lambda \vert^{2}+\vert \mu \vert^{2}+\lambda \overline{\mu}q+\overline{\lambda}\mu\overline{q}\nonumber \\
& = & \vert \lambda \vert^{2}+\vert \mu \vert^{2}+2Re\left( \lambda \overline{\mu}q \right), \nonumber
\end{eqnarray}
then for $ \gamma =\sqrt{\vert \lambda\vert^{2}+ \vert \mu\vert^{2}+ 2Re (\lambda \overline{\mu} q)} $ we have   $
\Vert \dfrac{\lambda x+\mu y}{\gamma}  \Vert_{A}=1 $   and $
\langle \frac{\lambda x+\mu y }{\gamma},x \rangle_{A}=\frac{\lambda +\overline{\mu} q }{\gamma}.$
Hence, from the above inequality and the definitions of $ A$-numerical and $ (A,q)$-numerical radii we have
$$
\gamma \omega_{A,\frac{\lambda + \mu \overline{q}}{\gamma}} (T)\leq \vert \lambda\vert\omega_{A}(T)+\vert \mu\vert\omega_{A,q}(T^{*}).
$$

Similarly, from the above inequality and the definitions of $ A$-numerical radius and $ (A,q)$-Crawford number it is implies
$$
\gamma c_{A,\frac{\lambda + \mu \overline{q}}{\gamma} }(T)\leq \vert \lambda\vert\omega_{A}(T)+\vert \mu\vert c_{A,q}(T^{*}).
$$
(6) For $ x,y \in H, \ \Vert x \Vert_{A}=\Vert y \Vert_{A}=1, \ \langle x,y \rangle_{A}=q, \ 0<\vert q \vert \leq 1 $, it is true 
\begin{eqnarray}\label{INeq_2}
\Vert x\pm y \Vert^{2}_{A} & = & \langle A(x\pm y),x\pm y \rangle \nonumber \\
& = & \langle Ax,x \rangle \pm \langle Ax,y \rangle \pm \langle Ay,x \rangle + \langle Ay,y \rangle \nonumber \\
& = & \Vert x \Vert_{A}^{2}\pm 2 Re \langle x,y \rangle_{A}+ \Vert y \Vert_{A}^{2} \nonumber \\
& = & 2 \pm 2 Req. 
\end{eqnarray}
Hence, we have
$$
\Vert x\pm y \Vert_{A}=\sqrt{2}\sqrt{1\pm Req}.
$$
(7) For any $ x,y \in H, \ \Vert x \Vert_{A}=\Vert y \Vert_{A}=1, \  \langle x,y \rangle_{A}=q, \ q\neq 1, $ we have 
\begin{eqnarray}
\vert \langle Tx,x\rangle_{A} \vert & \leq & \vert \langle Tx,y\rangle_{A} \vert +  \vert \langle Tx,x-y\rangle_{A} \vert \nonumber \\
& = & \vert \langle Tx,y\rangle_{A} \vert + \sqrt{2(1-Req)}\vert \langle Tx,\frac{x-y}{\sqrt{2(1-Req)}}\rangle_{A} \vert . \nonumber 
\end{eqnarray}
Then, from (6), we have $ \Vert \frac{x-y}{\sqrt{2(1-Req)}} \Vert_{A}=1 $ and $ \langle x, \frac{x-y}{\sqrt{2(1-Req)}} \rangle =\frac{1-q}{\sqrt{2(1-Req)}}. $
Hence, from the previous relation and the definitions of $ A$-numerical and $ (A,q)$-numerical radii we have
$$
\omega_{A}(T)\leq \omega_{A, q}(T)+ \sqrt{2(1-Req)}\omega_{A,\frac{1-q}{\sqrt{2(1-Req)}}}(T).
$$
(8) This claim can be proved similarly. \\ \\
  \end{proof}
\noindent {\it \textbf{Note}}: From the claims (7) and (8) of Theorem \ref{thm1}, if $ q=1, $ then
$$
\omega_{A,q}(T)=\omega_{A}(T), \ c_{A,q}(T)=c_{A}(T)
$$
for any operator $ T\in L^{A}(H). $ \\ \\
\noindent {\it \textbf{Note}}: From Theorem \ref{thm1}, the following is true
$$
(1-\sqrt{2(1-Req)})\omega_{A}(T)\leq \omega_{A,q}(T)
$$
for any $ T\in L^{A}(H) $ and $ q\in \mathbb{C}, \ 0<\vert q \vert\leq 1. $
\begin{theorem}\label{Thhm_2}
For the operator $ T\in L^{A}(H) $ and $ q\in \mathbb{C}, \ 0<\vert q \vert\leq 1, $ the following is true
$$
2\vert Re q \vert\omega_{A}(T) \leq \omega_{A,q}(T)+ \omega_{A,\overline{q}}(T) \leq 2 \omega_{A}(T) + 2\sqrt{2}\sqrt{1-Req} \Vert T \Vert_{A}.
$$
\end{theorem}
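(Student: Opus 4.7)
The plan is to establish the two inequalities of the claim separately; both of them ultimately reduce to elementary manipulations of the $A$-inner product together with a single appeal to Theorem \ref{thm1}(6).

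For the upper bound I would fix arbitrary $x,y\in H$ satisfying $\Vert x\Vert_A=\Vert y\Vert_A=1$ and $\langle x,y\rangle_A=q$ and exploit the identity
$$\langle Tx,y\rangle_A=\langle Tx,x\rangle_A+\langle Tx,y-x\rangle_A.$$
The triangle inequality followed by the Cauchy--Schwarz inequality for the semi-inner product $\langle\cdot,\cdot\rangle_A$ gives
$$\vert\langle Tx,y\rangle_A\vert\le \omega_A(T)+\Vert Tx\Vert_A\,\Vert y-x\Vert_A\le \omega_A(T)+\sqrt{2}\sqrt{1-Re\,q}\,\Vert T\Vert_A,$$
where part (6) of Theorem \ref{thm1} supplies $\Vert y-x\Vert_A=\sqrt{2}\sqrt{1-Re\,q}$, and $\Vert Tx\Vert_A\le \Vert T\Vert_A$ follows from $\Vert x\Vert_A=1$. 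Taking the supremum over admissible $(x,y)$ yields $\omega_{A,q}(T)\le\omega_A(T)+\sqrt{2}\sqrt{1-Re\,q}\,\Vert T\Vert_A$. Since $Re\,\overline{q}=Re\,q$, rerunning the same argument with $\overline{q}$ in place of $q$ produces the twin inequality for $\omega_{A,\overline{q}}(T)$, and summing the two finishes the upper estimate.

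For the lower bound I would take an arbitrary $x\in H$ with $\Vert x\Vert_A=1$ and build two companion vectors
$$y_1=\overline{q}\,x+\beta z,\qquad y_2=q\,x-\beta z,$$
where $z\in H$ is $A$-orthogonal to $x$ with $\Vert z\Vert_A=1$ and $\beta\in\mathbb{C}$ is chosen so that $\vert\beta\vert^2=1-\vert q\vert^2$. A direct expansion of $\Vert y_j\Vert_A^2$ shows $\Vert y_1\Vert_A=\Vert y_2\Vert_A=1$, while $\langle x,y_1\rangle_A=q$ and $\langle x,y_2\rangle_A=\overline{q}$. The decisive cancellation is
$$\langle Tx,y_1\rangle_A+\langle Tx,y_2\rangle_A=(q+\overline{q})\langle Tx,x\rangle_A=2\,Re(q)\,\langle Tx,x\rangle_A,$$
since the contributions $\pm\overline{\beta}\,\langle Tx,z\rangle_A$ cancel by design. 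Taking absolute values and then the supremum over $x$ delivers $2\vert Re\,q\vert\,\omega_A(T)\le \omega_{A,q}(T)+\omega_{A,\overline{q}}(T)$.

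The principal obstacle is producing the auxiliary $A$-unit vector $z$ that is $A$-orthogonal to $x$ in the semi-Hilbert setting. When $\vert q\vert=1$ this step is vacuous, since $\beta=0$ works and one simply takes $y_1=\overline{q}\,x$ and $y_2=q\,x$. When $\vert q\vert<1$, the mere nonemptiness of $W_{A,q}(T)$ already forces the quotient of $H$ by the null space of the $A$-seminorm to be at least two-dimensional, and a single Gram--Schmidt adjustment applied to any $w\in H$ with $\Vert w\Vert_A\neq 0$ that is not a scalar multiple of $x$ modulo that null space supplies the required $z$.
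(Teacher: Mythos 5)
Your proof is correct, and while the upper bound is obtained exactly as in the paper (split $\langle Tx,y\rangle_A=\langle Tx,x\rangle_A+\langle Tx,y-x\rangle_A$, apply Cauchy--Schwarz and part (6) of Theorem \ref{thm1}, then use $Re\,\overline{q}=Re\,q$), your lower bound takes a genuinely different route. The paper expands $\langle T(x\pm y),x\pm y\rangle_A$ for an admissible pair $(x,y)$, bounds the four resulting terms by $2\omega_A(T)+\omega_{A,q}(T)+\omega_{A,\overline{q}}(T)$, and then normalizes $x\pm y$; this forces a case split on the sign of $Re\,q$ and, more importantly, tacitly assumes that the normalized vectors $\frac{x\pm y}{\sqrt{2(1\pm Re\,q)}}$ sweep out enough of the $A$-unit sphere for their supremum of $\vert\langle Tu,u\rangle_A\vert$ to equal $\omega_A(T)$. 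Your construction of the companion vectors $y_1=\overline{q}\,x+\beta z$ and $y_2=q\,x-\beta z$ avoids both issues: the cross terms cancel exactly, giving $\langle Tx,y_1\rangle_A+\langle Tx,y_2\rangle_A=2Re(q)\langle Tx,x\rangle_A$ for \emph{every} $A$-unit $x$, so the supremum over $x$ is genuinely $\omega_A(T)$ and no case analysis is needed. The price is the existence of the auxiliary vector $z$, which you handle correctly: for $\vert q\vert=1$ it is not needed, and for $\vert q\vert<1$ the nonemptiness of $W_{A,q}(T)$ forces the quotient of $H$ by the kernel of the $A$-seminorm to have dimension at least two, after which Gram--Schmidt in that (genuine) inner product space produces $z$. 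On balance your argument is tighter than the published one at the one step where the paper is least explicit.
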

\begin{proof} 
For $ x,y \in H, \ \Vert x \Vert_{A}=\Vert y \Vert_{A}=1, \ \langle x,y \rangle_{A}=q, \ 0<\vert q \vert \leq 1, $  it is clear that 
\begin{eqnarray}
\langle T(x\pm y),x\pm y \rangle_{A} & = & \langle AT(x\pm y), x\pm y \rangle \nonumber \\
& = & \langle ATx,x\rangle \pm \langle ATx,y\rangle \pm \langle ATy,x\rangle + \langle ATy,y\rangle \nonumber \\
& = & \langle Tx,x\rangle_{A} \pm \langle Tx,y\rangle_{A} \pm \langle Ty,x\rangle_{A} + \langle Ty,y\rangle_{A}. \nonumber
\end{eqnarray}
Since,
\begin{eqnarray}
\vert \langle T(x\pm y),x\pm y \rangle_{A} \vert \leq \vert\langle Tx,x\rangle_{A}\vert + \vert\langle Tx,y\rangle_{A}\vert + \vert \langle Ty,x\rangle_{A}\vert + \vert\langle Ty,y\rangle_{A}\vert , \nonumber
\end{eqnarray}
then for $ -1\leq Req<0 $, we have
$$
2(1-Req)\vert \langle T \left( \frac{x-y}{\sqrt{2(1-Req)}}\right) , \frac{x-y}{\sqrt{2(1-Req)}} \rangle_{A} \vert \leq 2\omega_{A}(T)+\omega_{A,q}(T)+ \omega_{A,\overline{q}}(T) .
$$
Consequently, 
$$
2(1-Req)\omega_{A}(T)\leq 2\omega_{A}(T) +\omega_{A,q}(T)+ \omega_{A,\overline{q}}(T).
$$
That is, for $ q\in \mathbb{C}, \ 0<\vert q \vert \leq 1, \ -1\leq Req < 0, $ 
$$
-2Req\omega_{A}(T)\leq \omega_{A,q}(T)+ \omega_{A,\overline{q}}(T).
$$
Similarly, for the $ 0\leq Req \leq 1 $, we have
$$
2(1+Req)\vert \langle T \left( \frac{x+y}{\sqrt{2(1+Req)}}\right) , \frac{x+y}{\sqrt{2(1+Req)}} \rangle \vert \leq 2\omega_{A}(T)+\omega_{A,q}(T)+ \omega_{A,\overline{q}}(T) .
$$
Then
$$
2(1+Req)\omega_{A}(T)\leq 2\omega_{A}(T) +\omega_{A,q}(T)+ \omega_{A,\overline{q}}(T).
$$
Hence, we have
$$
2Req \omega_{A}(T)\leq \omega_{A,q}(T)+ \omega_{A,\overline{q}}(T).
$$
Consequently, from these facts, it is established that 
$$
2 \vert Req \vert \omega_{A}(T)\leq \omega_{A,q}(T)+ \omega_{A,\overline{q}}(T)
$$
for $ q\in \mathbb{C}, \ 0<\vert q \vert \leq 1. $ \\
On the other hand, for $ x,y \in H, \ \Vert x \Vert_{A}=\Vert y \Vert_{A}=1, \ \langle x,y \rangle_{A}=q, \ 0<\vert q \vert \leq 1, $ it is clear that
$$
\vert \langle Tx,y \rangle_{A} \vert \leq \vert \langle Tx,x \rangle_{A} \vert + \vert \langle Tx,y-x \rangle_{A} \vert .
$$
Then, we have
\begin{eqnarray}
\omega _{A,q}(T) & \leq & \omega _{A}(T)+ \Vert T \Vert_{A} \Vert x-y \Vert_{A} \nonumber \\
& \leq &  \omega _{A}(T)+ \sqrt{2(1-Req)}\Vert T \Vert_{A} . \nonumber
\end{eqnarray}
Similarly, we get
$$
\omega _{A,\overline{q}}(T)\leq \omega _{A}(T)+ \sqrt{2(1-Re\overline{q})}\Vert T \Vert_{A}.
$$
Then, from the first part of this theorem we have 
$$
\omega _{A,q}(T)+\omega _{A,\overline{q}}(T)\leq 2\omega _{A}(T)+ 2\sqrt{2}\sqrt{1-Req}\Vert T \Vert_{A}.
$$
\end{proof}
\begin{lemma}\label{lem1}\textnormal{\cite{Nakazato}} 
    Suppose $0\leq q\leq 1$ and $T\in M_2(\mathbb{C})$. Then $T$ is unitarily similar to $e^{it}\begin{pmatrix}
        \gamma &a\\
        b & \gamma
    \end{pmatrix}$ for some $0\leq t \leq 2\pi $ and $0\leq b\leq a$. Also,
    \begin{align*}
        W_q(T)=e^{it}\left\{\gamma q+r\left((c+pd)\cos(s)+i(d+pc)\sin (s)\right): 0\leq r\leq 1, 0\leq s\leq 2\pi \right\},
    \end{align*}
   with $c=\frac{a+b}{2}, d=\frac{a-b}{2}~\mbox{and}~ p=\sqrt{1-q^2}. $ 
\end{lemma}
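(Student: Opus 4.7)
The plan is to exploit the two invariance properties of $W_q$: unitary similarity, $W_q(U^{*}TU)=W_q(T)$, which follows by replacing $(x,y)$ with $(U^{*}x,U^{*}y)$ (this preserves both the norms and the constraint $\langle x,y\rangle=q$), and scalar homogeneity under phase, $W_q(e^{it}M)=e^{it}W_q(M)$. Together these reduce the problem to computing $W_q$ on a single convenient representative of the unitary-similarity orbit of $T$, modulo a global rotation.

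The first step is to establish the canonical form. Every $T\in M_2(\mathbb{C})$ is unitarily similar to a matrix with constant diagonal: Schur-triangularize $T$ to get $\bigl(\begin{smallmatrix}\lambda_1 & * \\ 0 & \lambda_2\end{smallmatrix}\bigr)$, then conjugate by a suitable rotation to equalize the diagonal to $\tfrac{1}{2}\mathrm{tr}(T)$. Writing this common value as $e^{it}\gamma$ with $\gamma\geq 0$ factors out the phase. A diagonal unitary $\mathrm{diag}(1,e^{i\alpha})$ preserves the diagonal while rotating the two off-diagonal entries; a further application of the same trick combined with conjugation by the swap matrix $\bigl(\begin{smallmatrix}0 & 1 \\ 1 & 0\end{smallmatrix}\bigr)$ if necessary realizes both off-diagonal entries as real numbers with $0\leq b\leq a$.

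Next I would compute $W_q(M)$ for $M=\bigl(\begin{smallmatrix}\gamma & a \\ b & \gamma\end{smallmatrix}\bigr)$. Since $q\in[0,1]$ is real, any unit vectors $x,y\in\mathbb{C}^2$ with $\langle x,y\rangle=q$ decompose uniquely as $y=qx+pz$ with $z\perp x$, $\|z\|=1$, and $p=\sqrt{1-q^2}$. Hence
\[
\langle Mx,y\rangle \;=\; q\langle Mx,x\rangle + p\langle Mx,z\rangle.
\]
Parameterizing $x=(\cos\theta,\,e^{i\phi}\sin\theta)^{T}$ and $z=e^{i\psi}(-e^{-i\phi}\sin\theta,\,\cos\theta)^{T}$ (the phase $e^{i\psi}$ captures the residual one-parameter freedom in the orthogonal complement), direct expansion gives
\[
\langle Mx,x\rangle \;=\; \gamma + \tfrac{1}{2}\sin(2\theta)\bigl(a e^{i\phi}+b e^{-i\phi}\bigr),
\]
and an analogous expression for $\langle Mx,z\rangle$ in which $\psi$ enters as an outer phase. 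Separating the real and imaginary parts of $\langle Mx,y\rangle-\gamma q$ and letting $\theta,\phi,\psi$ range over their full domains, the collection of values should sweep out the claimed elliptical disk centered at $\gamma q$ with semi-axes $c+pd$ and $d+pc$.

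The main obstacle is this last identification: verifying that the parameterized set is exactly the elliptical disk, not merely contained in it, and accounting for the scaling parameter $r\in[0,1]$ that fills the interior. I would handle this by first fixing $r=1$ (boundary) through optimizing $\psi$ at each $(\theta,\phi)$, checking that the resulting curve is the boundary ellipse with the stated semi-axes, and then using that $\langle Mx,y\rangle=\gamma q$ is attained at $\theta=0$ together with convexity/star-shapedness to fill the interior. A useful sanity check is the specialization $q=1$: there $p=0$, and the formula collapses to the Kippenhahn elliptical range theorem for $W(M)$, namely the ellipse centered at $\gamma$ with semi-axes $c$ and $d$.
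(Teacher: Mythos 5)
The paper offers no proof of this lemma: it is quoted from Nakazato \cite{Nakazato}, so there is no internal argument to compare against. Your outline follows the natural route (canonical form under unitary similarity plus a global phase, then the decomposition $y=qx+pz$ with $z\perp x$, $p=\sqrt{1-q^2}$), and your $q=1$ and extreme-point checks are the right sanity tests. Nevertheless there are two genuine problems. The first is in the reduction: after equalizing the diagonal, conjugation by $\mathrm{diag}(e^{i\beta},e^{i\delta})$ multiplies the off-diagonal entries by $e^{i(\beta-\delta)}$ and $e^{-i(\beta-\delta)}$ respectively, so their product (equivalently, the sum of their arguments) is invariant, and the swap matrix does not change it either. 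Hence if you first spend the phase $e^{it}$ to force $\gamma\geq 0$, you cannot in general also make both off-diagonal entries nonnegative: for $T=\begin{pmatrix}1 & i\\ i & 1\end{pmatrix}$ the off-diagonal product is $-1$, which no diagonal unitary or swap can turn into a product of two nonnegative reals, and the only admissible canonical form has $\gamma=-i$. The correct normalization chooses $t$ as half the sum of the off-diagonal arguments, so that $a\geq b\geq 0$ can be achieved, and accepts a general complex $\gamma$ --- which is all the lemma asserts, since it imposes no positivity on $\gamma$.

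The second problem is that the substantive content of the lemma is deferred rather than proved. Writing $M=\gamma I+N$ with $N=\begin{pmatrix}0 & a\\ b & 0\end{pmatrix}$, one gets $\langle Mx,y\rangle=\gamma q+q\langle Nx,x\rangle+p\langle Nx,z\rangle$, and the claim is that the last two terms sweep out exactly the elliptical disk with semi-axes $c+pd$ and $d+pc$. Your parameterization does reach the extreme points (e.g. $\phi=0$, $\psi=\pi$ yields $qc\sin(2\theta)+p\left(d-c\cos(2\theta)\right)$, with maximum $c\sqrt{q^2+p^2}+pd=c+pd$), but the identification of the full image with the filled ellipse --- surjectivity onto the whole boundary curve and onto the interior --- is exactly the step you flag as ``the main obstacle'' and only promise to handle. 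As written, the argument is an outline whose decisive step is missing; to complete it you would need to carry out the optimization over $\psi$ for each $(\theta,\phi)$, verify that the resulting boundary is the stated ellipse, and justify the filling of the interior (connectivity of the parameter space and continuity suffice once the center $\gamma q$ and the boundary are both attained).
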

\begin{remark}\label{rem0}
   Setting $A=I$ in Theorem \ref{Thhm_2}, and consider $q\in [0, 1]$,  we have 
\begin{align}\label{Eqn_2}
  2 q \omega(T) \leq 2\omega_{q}(T) \leq 2 \omega(T) + 2\sqrt{2}\sqrt{1-q} \Vert T \Vert. 
\end{align}  
\end{remark}
\begin{example}\label{Ex_1}
    Let $T=\begin{pmatrix}
        0 & \frac{1}{70}\\
        0 & 0
    \end{pmatrix}$ and $q\in [0, 1]$. Using Lemma \ref{lem1}, we get 
    $$W_q(T)=\left\{\frac{re^{is}}{140}(1+\sqrt{1-q^2}): 0\leq r\leq 1, 0\leq s\leq 2\pi\right\}.$$
    So, $$\omega_q(T)=\frac{1}{140}(1+\sqrt{1-q^2}).$$
    Since $T^2=0$,  we have $\omega(T)=\frac{\|T\|}{2}=\frac{1}{140}$. From the Remark \ref{rem0}, we get  $2q\omega(T)= \frac{q}{70}$, $2\omega_{q}(T)=\frac{1}{70}(1+\sqrt{1-q^2})$, and using the fact $\|T\|=\frac{1}{70}$, the value of $2 \omega(T) + 2\sqrt{2}\sqrt{1-q} \Vert T \Vert= \frac{1}{70}(1+2\sqrt{2}\sqrt{1-q^2}).$
     Figure~\ref{Fig-1} compares the upper and lower bounds of $2\omega_q(T)$.
    \begin{figure}[H]
\centering
{\includegraphics[scale=.58]{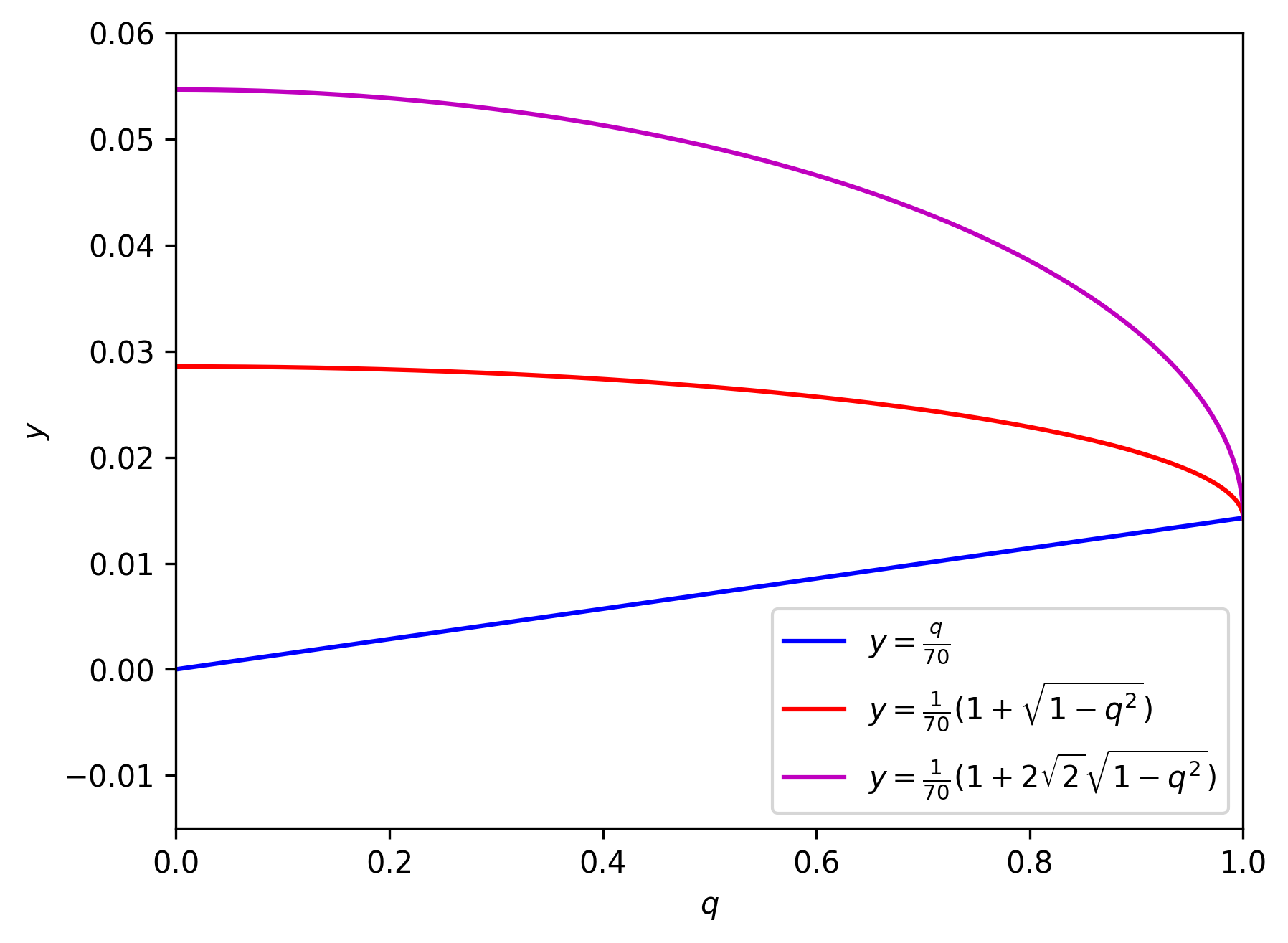}}
\caption{Comparision of $\omega_q(T) $ with the upper and lower bounds \eqref{Eqn_2} for Example \ref{Ex_1}. }	 
\label{Fig-1}
\end{figure}
   \end{example}

\begin{theorem}
\label{thm3}
If $ A_{1}\in L(H_{1}), \ A_{2}\in L(H_{2}), \ A_{1}\geq 0, \ A_{2}\geq 0 $ and $ T_{1}\in L^{A_{1}}(H_{1}), \ T_{2}\in L^{A_{2}}(H_{2}).  $ Then, for $ A=A_{1}\otimes A_{2}, \  T=T_{1}\otimes T_{2}, $ we have
$$
c_{A,q}(T)\leq c_{A_{1},q_{1}}(T_{1})c_{A_{2},q_{2}}(T_{2}) \leq \omega_{A_{1},q_{1}}(T_{1})\omega_{A_{2},q_{2}}(T_{2})\leq \omega_{A,q}(T),
$$
where, $ q_{1},q_{2}\in \mathbb{C}, \ 0<\vert q_{1} \vert \leq 1, \ 0<\vert q_{2} \vert \leq 1, \ q=q_{1}q_{2}.  $
\end{theorem}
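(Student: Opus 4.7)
The approach hinges on the multiplicativity of the tensor semi-inner product on elementary tensors. First, observe that with $A = A_{1} \otimes A_{2}$,
\begin{equation*}
\langle x_{1} \otimes x_{2},\, y_{1} \otimes y_{2} \rangle_{A} = \langle x_{1}, y_{1} \rangle_{A_{1}}\, \langle x_{2}, y_{2} \rangle_{A_{2}},
\end{equation*}
whence $\Vert x_{1} \otimes x_{2}\Vert_{A} = \Vert x_{1}\Vert_{A_{1}} \Vert x_{2}\Vert_{A_{2}}$; and by the same bilinear reduction,
\begin{equation*}
\langle T(x_{1}\otimes x_{2}),\, y_{1} \otimes y_{2} \rangle_{A} = \langle T_{1} x_{1}, y_{1} \rangle_{A_{1}}\, \langle T_{2} x_{2}, y_{2} \rangle_{A_{2}}.
\end{equation*}
These elementary tensor identities are all that I plan to use.

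The core of the proof is a set inclusion between numerical ranges. Given $x_{i}, y_{i} \in H_{i}$ with $\Vert x_{i}\Vert_{A_{i}}=\Vert y_{i}\Vert_{A_{i}}=1$ and $\langle x_{i}, y_{i}\rangle_{A_{i}}=q_{i}$, set $x=x_{1}\otimes x_{2}$ and $y=y_{1}\otimes y_{2}$; the identities above give $\Vert x\Vert_{A}=\Vert y\Vert_{A}=1$ and $\langle x,y\rangle_{A} = q_{1} q_{2} = q$, so that $(x,y)$ is admissible in the definition of $W_{A,q}(T)$. Consequently, whenever $\lambda_{i} \in W_{A_{i},q_{i}}(T_{i})$ for $i=1,2$, the product $\lambda_{1}\lambda_{2}$ lies in $W_{A,q}(T)$. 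Taking the supremum over such pairs of $|\lambda_{1}\lambda_{2}|=|\lambda_{1}||\lambda_{2}|$ yields $\omega_{A_{1},q_{1}}(T_{1})\omega_{A_{2},q_{2}}(T_{2}) \leq \omega_{A,q}(T)$, and taking the infimum yields $c_{A,q}(T) \leq c_{A_{1},q_{1}}(T_{1})c_{A_{2},q_{2}}(T_{2})$. The middle inequality is immediate from $c_{A_{i},q_{i}}(T_{i}) \leq \omega_{A_{i},q_{i}}(T_{i})$ (both being non-negative).

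The main obstacle lies not in the chain of inequalities but in the preliminary tensor setup: the two factorisations above are genuinely statements about the semi-Hilbert space $(H_{1}\otimes H_{2},\, A_{1}\otimes A_{2})$, and because the $A_{i}$-semi-inner products may be degenerate, some care is needed with the standard semi-Hilbert tensor product construction before one can extend the identities from elementary tensors by sesquilinearity. A secondary subtlety is to ensure that each $W_{A_{i},q_{i}}(T_{i})$ is non-empty so that the infima and suprema are meaningful; this reduces to producing, for any $0<|q_{i}|\leq 1$, a pair of $A_{i}$-unit vectors with $A_{i}$-inner product equal to $q_{i}$, which is routine in any non-degenerate semi-Hilbert space. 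Once these preliminaries are handled, the set-inclusion argument sketched above delivers the full chain without further work.
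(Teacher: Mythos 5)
Your proposal is correct and follows essentially the same route as the paper: establish the multiplicativity of the $A$-semi-inner product on elementary tensors, deduce the inclusion $W_{A_{1},q_{1}}(T_{1})\,W_{A_{2},q_{2}}(T_{2})\subset W_{A,q}(T)$, and then compare suprema and infima over the subset versus the full set. The middle inequality $c_{A_{i},q_{i}}(T_{i})\leq\omega_{A_{i},q_{i}}(T_{i})$ and the caveats you flag (non-degeneracy and non-emptiness) are minor additions the paper leaves implicit.
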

\begin{proof}
    For any $ x=x_{1}\otimes x_{2}, \ y=y_{1}\otimes y_{2}\in H_{1}\otimes H_{2}, $ we have $ \langle x,y \rangle_{A}= \langle x_{1},y_{1} \rangle_{A_{1}} \langle x_{2},y_{2} \rangle_{A_{2}}=q_{1}q_{2},$ $ \Vert x \Vert_{A}= \Vert x_{1} \Vert_{A_{1}}\Vert x_{2} \Vert_{A_{2}}, \ \Vert y \Vert_{A}= \Vert y_{1} \Vert_{A_{1}}\Vert y_{2} \Vert_{A_{2}} $ and
\begin{eqnarray}
\langle Tx,y \rangle_{A} & = & \langle (A_{1}\otimes A_{2})(T_{1}x_{1}\otimes T_{2}x_{2}), y_{1}\otimes y_{2}\rangle \nonumber \\
& = & \langle A_{1}T_{1}x_{1}\otimes A_{2}T_{2}x_{2}, y_{1}\otimes y_{2}\rangle \nonumber \\
& = & \langle T_{1}x_{1}, y_{1}\rangle_{A_{1}} \langle T_{2}x_{2}, y_{2}\rangle_{A_{2}} .\nonumber 
\end{eqnarray}
If $ \Vert x_{1} \Vert_{A_{1}} = \Vert x_{2} \Vert_{A_{2}} = \Vert y_{1} \Vert_{A_{1}} = \Vert y_{2} \Vert_{A_{2}}=1, $
then for $ \Vert x \Vert_{A}=\Vert y \Vert_{A}=1 $ and $ \langle x,y \rangle_{A}=q, \ q=q_{1}q_{2} $, we have 
$$
W_{A_{1},q_{1}}(T_{1})W_{A_{2},q_{2}}(T_{2})\subset W_{A,q}(T).
$$
On the other hand, since
\begin{eqnarray}
& & \sup \left\lbrace  \vert \langle T_{1}x_{1}, y_{1}\rangle_{A_{1}} \vert \vert \langle T_{2}x_{2}, y_{2}\rangle_{A_{2}} \vert: \ \Vert x_{1} \Vert_{A_{1}}=\Vert y_{1} \Vert_{A_{1}}=1, \ \langle x_{1},y_{1}\rangle_{A_{1}} =q_{1}, \right.  \nonumber \\
& & \ \ \ \ \ \ \left.  \Vert x_{2} \Vert_{A_{2}}=\Vert y_{2} \Vert_{A_{2}}=1, \ \langle x_{2},y_{2}\rangle_{A_{2}} =q_{2}  \right\rbrace   \nonumber \\
& = & \sup \left\lbrace  \vert \langle Tx, y\rangle_{A} \vert : \ \Vert x_{1} \Vert_{A_{1}}=\Vert y_{1} \Vert_{A_{1}}=1, \ \langle x_{1},y_{1}\rangle_{A_{1}} =q_{1}, \right.  \nonumber \\
& & \ \ \ \ \ \ \left.  \Vert x_{2} \Vert_{A_{2}}=\Vert y_{2} \Vert_{A_{2}}=1, \ \langle x_{2},y_{2}\rangle_{A_{2}} =q_{2}  \right\rbrace   \nonumber \\
& \leq &  \sup \left\lbrace  \vert \langle Tx, y\rangle_{A} \vert : \ \Vert x \Vert_{A}=\Vert y \Vert_{A}=1, \ \langle x,y\rangle_{A} =q, \ q=q_{1}q_{2} \right\rbrace   \nonumber 
\end{eqnarray}
and
\begin{eqnarray}
& & \inf \left\lbrace  \vert \langle T_{1}x_{1}, y_{1}\rangle_{A_{1}} \vert \vert \langle T_{2}x_{2}, y_{2}\rangle_{A_{2}} \vert: \ \Vert x_{1} \Vert_{A_{1}}=\Vert y_{1} \Vert_{A_{1}}=1, \ \langle x_{1},y_{1}\rangle_{A_{1}} =q_{1}, \right.  \nonumber \\
& & \ \ \ \ \ \ \left.  \Vert x_{2} \Vert_{A_{2}}=\Vert y_{2} \Vert_{A_{2}}=1, \ \langle x_{2},y_{2}\rangle_{A_{2}} =q_{2}  \right\rbrace   \nonumber \\
& = & \inf \left\lbrace  \vert \langle Tx, y\rangle_{A} \vert : \ \Vert x_{1} \Vert_{A_{1}}=\Vert y_{1} \Vert_{A_{1}}=1, \ \langle x_{1},y_{1}\rangle_{A_{1}} =q_{1}, \right.  \nonumber \\
& & \ \ \ \ \ \ \left.  \Vert x_{2} \Vert_{A_{2}}=\Vert y_{2} \Vert_{A_{2}}=1, \ \langle x_{2},y_{2}\rangle_{A_{2}} =q_{2}  \right\rbrace   \nonumber \\
& \geq &  \inf \left\lbrace  \vert \langle Tx, y\rangle_{A} \vert : \ \Vert x \Vert_{A}=\Vert y \Vert_{A}=1, \ \langle x,y\rangle_{A} =q, \ q=q_{1}q_{2} \right\rbrace ,  \nonumber 
\end{eqnarray}
then we get
$$
\omega_{A_{1},q_{1}}(T_{1})\omega_{A_{2},q_{2}}(T_{2})\leq \omega_{A,q}(T),
$$
$$
c_{A_{1},q_{1}}(T_{1})c_{A_{2},q_{2}}(T_{2})\geq c_{A,q}(T).
$$
\end{proof}
Using an argument similar to that used in the proof of the previous theorem, it is easy to prove the following result.
\begin{corollary}
(1) If $ c_{A_{1},q_{1}}(T_{1})>0 ,$ then $ W_{A_{2},q_{2}}(T_{2})\subset W_{A,q}(T)W_{A_{1},q_{1}}^{-1}(T_{1}). $ \\
(2) If $ c_{A_{2},q_{2}}(T_{2})>0 $, then $ W_{A_{1},q_{1}}(T_{1})\subset W_{A,q}(T)W_{A_{2},q_{2}}^{-1}(T_{2}) $. \\
Consequently, \\
if $c_{A_{1},q_{1}}(T_{1})>0, $ then $ \omega_{A_{2},q_{2}}(T_{2})\leq \frac{\omega_{A,q}(T)}{c_{A_{1},q_{1}}(T_{1})}, $  \\
if $c_{A_{2},q_{2}}(T_{2})>0, $ then $ \omega_{A_{1},q_{1}}(T_{1})\leq \frac{\omega_{A,q}(T)}{c_{A_{2},q_{2}}(T_{2})}, $\\
if $\omega_{A_{1},q_{1}}(T_{1})>0 , $ then $ c_{A_{2},q_{2}}(T_{2})\leq \frac{c_{A,q}(T)}{\omega_{A_{1},q_{1}}(T_{1})}, $\\
if $\omega_{A_{2},q_{2}}(T_{2})>0, $ then $ c_{A_{1},q_{1}}(T_{1})\leq \frac{c_{A,q}(T)}{\omega_{A_{2},q_{2}}(T_{2})}. $\\
Here, for $ S=\lbrace z\in \mathbb{C}: z\neq 0 \rbrace , \ S^{-1}= \left\lbrace \dfrac{1}{z}: z\in S  \right\rbrace .  $
\end{corollary}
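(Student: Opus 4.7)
The plan is to exploit a single observation already implicit in the proof of Theorem~\ref{thm3}: with $q=q_1q_2$, whenever $x_i,y_i$ are $A_i$-unit vectors satisfying $\langle x_i,y_i\rangle_{A_i}=q_i$ for $i=1,2$, the product vectors $x=x_1\otimes x_2$ and $y=y_1\otimes y_2$ are $A$-unit with $\langle x,y\rangle_A=q$, and the tensor identity $\langle Tx,y\rangle_A=\langle T_1x_1,y_1\rangle_{A_1}\langle T_2x_2,y_2\rangle_{A_2}$ holds. In set-theoretic terms, this yields the multiplicative inclusion
\[
W_{A_1,q_1}(T_1)\cdot W_{A_2,q_2}(T_2)\subset W_{A,q}(T),
\]
which is the only ingredient required for everything that follows.

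To establish part~(1), I would assume $c_{A_1,q_1}(T_1)>0$, so that $0\notin W_{A_1,q_1}(T_1)$ and the set $W_{A_1,q_1}^{-1}(T_1)$ is well defined. Given any $\lambda_2\in W_{A_2,q_2}(T_2)$, fix an arbitrary $\nu\in W_{A_1,q_1}(T_1)$; then $\mu:=\nu\lambda_2$ belongs to $W_{A,q}(T)$ by the inclusion above, and hence $\lambda_2=\mu\cdot\nu^{-1}\in W_{A,q}(T)\,W_{A_1,q_1}^{-1}(T_1)$. Part~(2) is obtained by swapping the roles of the indices $1$ and $2$.

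The four numerical consequences are then standard supremum/infimum estimates applied to the inclusions from (1) and (2). I would invoke the elementary fact that for a nonempty $S\subset\mathbb{C}$ and a nonempty $U\subset\mathbb{C}\setminus\{0\}$ one has
\[
\sup_{z\in S\cdot U^{-1}}|z|\leq\frac{\sup_{s\in S}|s|}{\inf_{u\in U}|u|},\qquad\inf_{z\in S\cdot U^{-1}}|z|\geq\frac{\inf_{s\in S}|s|}{\sup_{u\in U}|u|},
\]
and specialize to $S=W_{A,q}(T)$, $U=W_{A_j,q_j}(T_j)$ so that $\sup|S|=\omega_{A,q}(T)$, $\inf|S|=c_{A,q}(T)$, $\sup|U|=\omega_{A_j,q_j}(T_j)$, $\inf|U|=c_{A_j,q_j}(T_j)$. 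Plugging in reads off the claimed bounds on $\omega_{A_i,q_i}(T_i)$ and $c_{A_i,q_i}(T_i)$ directly, provided the relevant nonvanishing hypothesis ($c_{A_j,q_j}(T_j)>0$ or $\omega_{A_j,q_j}(T_j)>0$) secures the denominator.

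No new analytic input beyond Theorem~\ref{thm3} is needed; the single nontrivial bookkeeping task is pairing each conclusion with the correct choice of numerator/denominator bound ($c$ versus $\omega$) and checking which hypothesis ensures the set $W_{A_j,q_j}^{-1}(T_j)$ is well defined. That pairing is the main place a careless argument could slip, but it is routine once the inclusions (1) and (2) are in hand.
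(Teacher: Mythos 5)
Your route is the paper's intended one: the paper gives no explicit proof, only the remark that the corollary follows by the argument of Theorem~\ref{thm3}, and the multiplicative inclusion $W_{A_1,q_1}(T_1)\,W_{A_2,q_2}(T_2)\subset W_{A,q}(T)$ extracted from that proof is exactly the right starting point. Parts (1) and (2) and the two bounds on $\omega_{A_i,q_i}(T_i)$ are handled correctly: from $W_{A_2,q_2}(T_2)\subset W_{A,q}(T)\,W_{A_1,q_1}^{-1}(T_1)$ and your first elementary fact you get $\omega_{A_2,q_2}(T_2)\leq \omega_{A,q}(T)/c_{A_1,q_1}(T_1)$ as claimed, and symmetrically for the other index.

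The gap is in the last two consequences, precisely at the ``bookkeeping'' step you yourself flagged as the danger point and then did not carry out. Applying your second elementary fact to the same inclusion gives
$$
c_{A_2,q_2}(T_2)\;\geq\;\inf\left\{|z|: z\in W_{A,q}(T)\,W_{A_1,q_1}^{-1}(T_1)\right\}\;\geq\;\frac{c_{A,q}(T)}{\omega_{A_1,q_1}(T_1)},
$$
which is the \emph{reverse} of the inequality asserted in the corollary: a subset inclusion can only push an infimum up, never down, so no pairing of numerator and denominator recovers an upper bound on $c_{A_2,q_2}(T_2)$ from these inclusions. Indeed the printed inequality is false in general: take $H_1=\mathbb{C}^2$, $A_1=I$, $T_1=\mathrm{diag}(1,2)$, $q_1=1$, and $H_2=\mathbb{C}$, $A_2=1$, $T_2=1$, $q_2=1$; then $c_{A_2,q_2}(T_2)=1$ while $c_{A,q}(T)/\omega_{A_1,q_1}(T_1)=1/2$. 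So the conclusion actually obtainable by your method is $c_{A_2,q_2}(T_2)\geq c_{A,q}(T)/\omega_{A_1,q_1}(T_1)$ (and its index-swapped twin); the $\leq$ in the statement appears to be an error in the paper, and your proposal, by asserting that the claimed bounds ``read off directly,'' papers over this rather than detecting it.
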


\setcounter{section}{3}
\section*{\normalsize{3. Some convergence properties of the $ (A,q)$-numerical radius and $ (A,q)$-Crawford number via operator sequences}}

\begin{definition}
A sequence $ (T_{n}) \subset L^{A}(H) $ is said to converge uniformly to $ T\in L^{A}(H), $ if for any $ \epsilon >0 , $ there exists a positive integer $ N $ such that 
$$
\Vert T_{n}- T \Vert_{A} <\epsilon \ \text{ for all} \  n>N .
$$
\end{definition}

\begin{theorem}
\label{thm4}
For any $ T\in L^{A}(H) $ and $ q\in \mathbb{C}, \ 0<\vert q \vert\leq 1 $, the following are true: \\
(1) $ \vert \omega_{A,q}(T)-\omega_{A}(T)\vert\leq \sqrt{2(1-Req)}\Vert T \Vert_{A}. $ \\
(2) If $ q_{n}\in \mathbb{C}, \ 0<\vert q_{n} \vert \leq 1 $ and $ Req_{n}\rightarrow 1, \ n\rightarrow \infty, $ then 
$$
\lim\limits_{n\rightarrow \infty}\omega_{A,q_{n}}(T)=\omega_{A}(T).
$$
(3) If the operator sequences $ (T_{n})$ converges uniformly to an operator $ T $ in $ L^{A}(H), $ then
$$
\lim\limits_{n\rightarrow \infty}\omega_{A,q}(T_{n})=\omega_{A,q}(T).
$$
\end{theorem}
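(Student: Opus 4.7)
The strategy is to read (1) off from earlier parts of Theorem~\ref{thm1} and from the argument already used in the proof of Theorem~\ref{Thhm_2}, to deduce (2) as a trivial consequence of (1), and to prove (3) by a direct Cauchy--Schwarz continuity estimate. So the real content is in (1) and (3); no new machinery is needed.

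For (1), I would split the claim into the two one-sided inequalities $\omega_{A,q}(T)-\omega_A(T)\leq \sqrt{2(1-\operatorname{Re} q)}\|T\|_A$ and $\omega_A(T)-\omega_{A,q}(T)\leq \sqrt{2(1-\operatorname{Re} q)}\|T\|_A$. The first is already implicit in the second half of the proof of Theorem~\ref{Thhm_2}: for any admissible $x,y$ (i.e.\ $\|x\|_A=\|y\|_A=1$ and $\langle x,y\rangle_A=q$) one writes
\begin{equation*}
|\langle Tx,y\rangle_A|\leq |\langle Tx,x\rangle_A|+|\langle Tx,y-x\rangle_A|\leq \omega_A(T)+\|T\|_A\,\|x-y\|_A,
\end{equation*}
then invokes Theorem~\ref{thm1}(6) to replace $\|x-y\|_A$ by $\sqrt{2(1-\operatorname{Re} q)}$ and takes the supremum over admissible pairs. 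The second inequality is a direct consequence of Theorem~\ref{thm1}(7) combined with Theorem~\ref{thm1}(1), provided one first checks that the auxiliary parameter $(1-q)/\sqrt{2(1-\operatorname{Re} q)}$ has modulus at most $1$; this reduces to verifying $|1-q|^2\leq 2(1-\operatorname{Re} q)$, which is the same as $|q|^2\leq 1$ and hence holds by hypothesis. Part (2) then follows immediately, since under $\operatorname{Re} q_n\to 1$ the right-hand side of (1) tends to zero.

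For (3), I would fix admissible $x,y$ (so in particular $\|y\|_A=1$). The Cauchy--Schwarz computation used in the proof of Theorem~\ref{thm1}(1) gives
\begin{equation*}
\bigl|\langle (T_n-T)x,y\rangle_A\bigr|\leq \|(T_n-T)x\|_A\,\|y\|_A\leq \|T_n-T\|_A,
\end{equation*}
and therefore, by the reverse triangle inequality,
\begin{equation*}
\bigl||\langle T_n x,y\rangle_A|-|\langle Tx,y\rangle_A|\bigr|\leq \|T_n-T\|_A.
\end{equation*}
Taking the supremum over all admissible pairs $(x,y)$ yields $|\omega_{A,q}(T_n)-\omega_{A,q}(T)|\leq \|T_n-T\|_A$, and the $A$-uniform convergence hypothesis $\|T_n-T\|_A\to 0$ finishes the argument.

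The only mildly delicate step in the whole proof is the bookkeeping check that the auxiliary parameter $(1-q)/\sqrt{2(1-\operatorname{Re} q)}$ appearing in Theorem~\ref{thm1}(7) indeed lies in the admissible range $0<|\cdot|\leq 1$ (so that Theorem~\ref{thm1}(1) is legitimately applicable to bound it by $\|T\|_A$); everything else is an immediate combination of pieces already established.
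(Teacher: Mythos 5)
Your proposal is correct and follows essentially the same route as the paper: the two one-sided estimates for (1) via the triangle inequality, Cauchy--Schwarz and Theorem~\ref{thm1}(6) (the paper re-derives the second one directly rather than citing Theorem~\ref{thm1}(7), but the computation is identical, and your check that $|1-q|^2\leq 2(1-\operatorname{Re}q)$ is the right bookkeeping), (2) as an immediate corollary, and (3) from the Lipschitz bound $|\omega_{A,q}(T_n)-\omega_{A,q}(T)|\leq\|T_n-T\|_A$, which the paper obtains via subadditivity of $\omega_{A,q}$ and you obtain by the equivalent pointwise estimate.
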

\begin{proof}
   (1) For any $ x,y\in H $ with the properties $ \Vert x \Vert_{A}=\Vert y \Vert_{A}=1, \  \langle x,y \rangle_{A}=q, \ 0<\vert q \vert \leq 1, $ it is valid that
\begin{eqnarray}
\vert \langle Tx,y \rangle_{A}\vert & = & \vert \langle ATx,y \rangle \vert \nonumber \\
& \leq & \vert \langle ATx,x \rangle \vert + \vert \langle ATx,y-x \rangle \vert \nonumber \\
& \leq & \vert \langle Tx,x \rangle_{A} \vert + \langle ATx,Tx \rangle^{1/2}\langle A(y-x),y-x \rangle^{1/2} \nonumber \\
& = & \vert \langle Tx,x \rangle_{A} \vert + \Vert Tx \Vert_{A} \Vert y-x \Vert_{A}. \nonumber
\end{eqnarray}
Now, using inequality \eqref{INeq_2} and the previous estimate, we get
$$
\vert \langle Tx,y \rangle_{A}\vert \leq \vert \langle Tx,x \rangle_{A}\vert + \sqrt{2}\sqrt{1-Req} \Vert T \Vert_{A}.
$$
This means that 
$$
\omega_{A,q}(T)\leq \omega_{A}(T)+ \sqrt{2}\sqrt{1-Req} \Vert T \Vert_{A}. 
$$
On the other hand, similarly from the following inequality,
$$
\vert \langle Tx,x \rangle_{A} \vert \leq \vert \langle Tx,y \rangle_{A} \vert + \vert \langle Tx,x-y \rangle_{A} \vert,
$$
it can be easily to shown that
$$
\omega_{A}(T)\leq \omega_{A,q}(T)+ \sqrt{2}\sqrt{1-Req} \Vert T \Vert_{A}. 
$$
Consequently, it is established that 
$$ 
\vert \omega_{A,q}(T)-\omega_{A}(T)\vert\leq \sqrt{2(1-Req)}\Vert T \Vert_{A}.
$$
(2) This is a result of the property of the first proposition of this theorem. \\
(3) From the definition of the $ (A,q) $-numerical radius, we have
$$
\omega _{A,q}(T+S)\leq \omega _{A,q}(T)+ \omega _{A,q}(S)
$$
for $ T,S \in L^{A}(H) $ and $ q\in \mathbb{C}, \ 0< \vert q \vert \leq 1. $ So, the following claim is true
$$
\vert \omega _{A,q}(T)-\omega _{A,q}(S) \vert \leq \omega _{A,q}(T\pm S).
$$
Hence, for any $ n\geq 1 $, from the claim (1) of Theorem \ref{thm1}, it is obtained that 
$$
\vert \omega _{A,q}(T_{n})-\omega _{A,q}(T) \vert \leq \omega _{A,q}(T_{n}-T)\leq \Vert T_{n}-T \Vert_{A}.
$$
From the last relation and the uniform convergence of the sequence $ (T_{n}) $ to $ T $ in $ L^{A}(H), $ it follows that 
$$
\lim\limits_{n\rightarrow \infty}\omega_{A,q}(T_{n})=\omega_{A,q}(T).
$$
\end{proof}
\begin{remark}\label{rem1}
   Setting $A=I$ in Theorem \ref{thm4} (1), and considering $q\in [0, 1]$,  we have 
   \begin{align}
   \label{equ2}
       \vert \omega_{q}(T)-\omega(T)\vert\leq \sqrt{2(1-q)}\Vert T \Vert,~ \mbox{for any}~  T\in L(H). 
   \end{align}
 \end{remark}
Here we consider some examples only for a real number $q\in [0, 1]$.
\begin{example}\label{Ex_2}
    Let $T=\begin{pmatrix}
        0 & \frac{1}{24}\\
        0 & 0
    \end{pmatrix}$ and $q\in [0, 1]$. Using Lemma \ref{lem1}, we get 
    $$W_q(T)=\left\{\frac{re^{is}}{48}(1+\sqrt{1-q^2}): 0\leq r\leq 1, 0\leq s\leq 2\pi\right\}.$$
    So, $$\omega_q(T)=\frac{1}{48}(1+\sqrt{1-q^2}).$$
    By Remark \ref{rem1} and the fact that $\|T\|=\frac{1}{24}$, we see that the right side of Remark \ref{rem1} is $\sqrt{2(1-q)}\Vert T \Vert= \frac{\sqrt{2(1-q)}}{24}$.
    Since $T^2=0$,  we have $\omega(T)=\frac{\|T\|}{2}=\frac{1}{48}$. Therefore, the left side of Remark \ref{rem1} is
    $ \vert \omega_{q}(T)-\omega(T)\vert=\frac{\sqrt{1-q^2}}{48}.$ Figure \ref{Fig-2} compares  $ \vert \omega_{q}(T)-\omega(T)\vert = \frac{\sqrt{1-q^2}}{48}$ and the upper bound $\frac{\sqrt{2(1-q)}}{24}$.  
  \begin{figure}[H]
\centering
{\includegraphics[scale=.58]{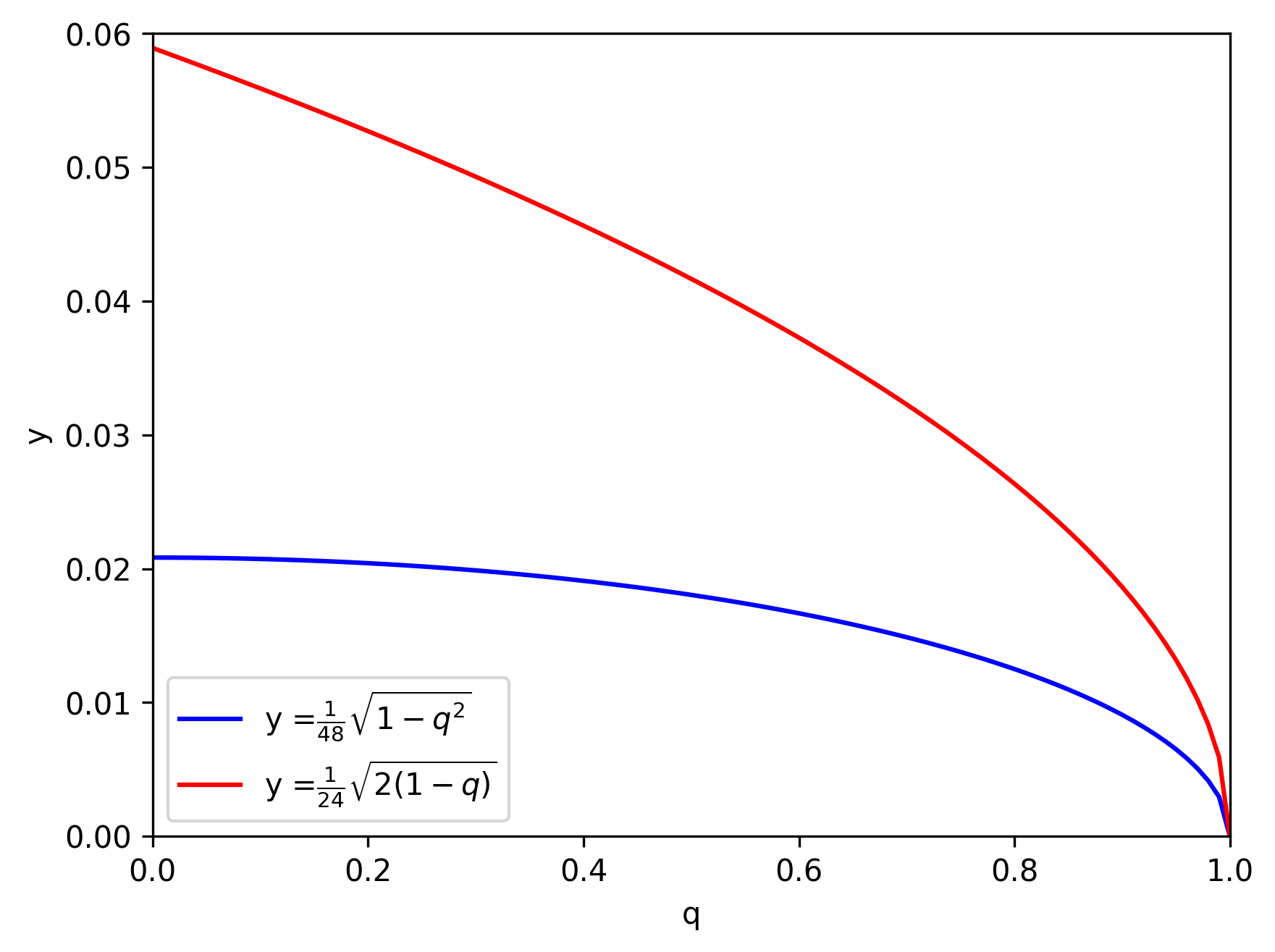}}
\caption{Comparision of $\omega_q(T) $ with the upper bound \eqref{equ2} for Example \ref{Ex_2}. }	 
\label{Fig-2}
\end{figure}
\end{example}

 \begin{example}\label{Ex_3}
    Let us consider $T=\frac{1}{20}I$ and $q\in [0, 1]$. In this case,
    $\omega_q(T)=\frac{1}{20}q.$ 
       Using $ \omega(T)=\|T\|=\frac{1}{20},$ the left side of Remark \ref{rem1} is $ \vert \omega_{q}(T)-\omega(T)\vert = \frac{1}{20}(q-1)$, while the right side of Remark \ref{rem1} is $\sqrt{2(1-q)}\Vert T \Vert=\frac{\sqrt{2(1-q)}}{20}$. Figure \ref{Fig-3} compares  $ \vert \omega_{q}(T)-\omega(T)\vert = \frac{1}{20}(q-1)$ and the upper bound $ \frac{\sqrt{2(1-q)}}{20}$.  
     \begin{figure}[H]
\centering
{\includegraphics[scale=.58]{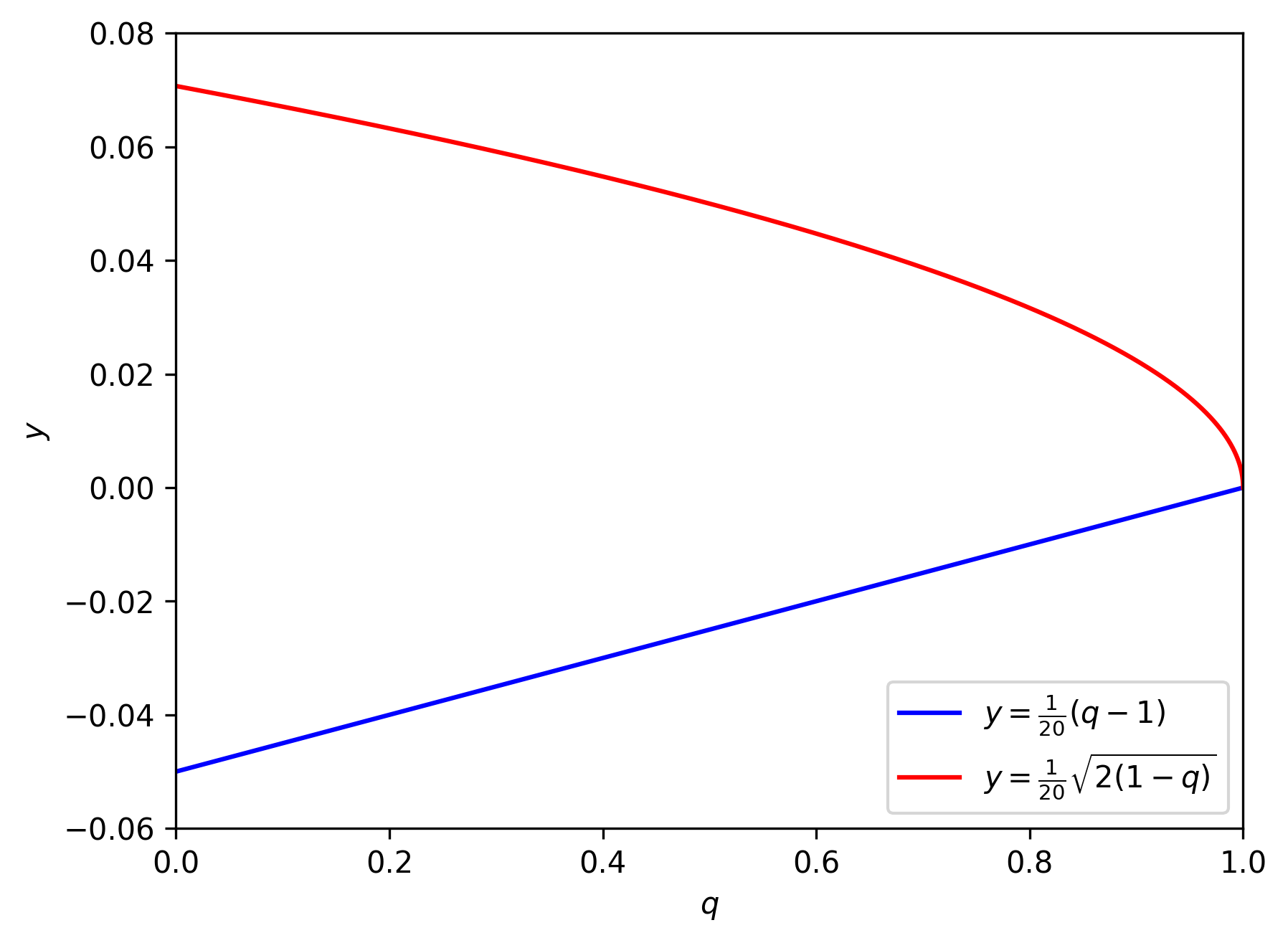}}
\caption{Comparision of $\omega_q(T) $ with the upper bound \eqref{equ2} for Example \ref{Ex_3}. }	 
\label{Fig-3}
\end{figure}
      \end{example}

\begin{example}\label{Ex_4}
    Let $T=\begin{pmatrix}
        0 &1 &0\\
        0 & 0 &1\\
        0 &0 &0
    \end{pmatrix}$ and $q\in [\frac{1}{2}, 1]$. Using \cite[Proposition 4.3]{LiNakazato}, we get 
    $$\omega_q(T)=\frac{1}{8}\sqrt{27+18q-13q^2+(9+7q)\sqrt{(1-q)(9+7q)}}.$$
    By Remark \ref{rem1} and the fact that $\|T\|=1$, we see that the right side of inequality (\ref{equ2}) is $\sqrt{2(1-q)}\Vert T \Vert= \sqrt{2(1-q)}$. Also, we know that 
    $\omega(T)=\frac{1}{\sqrt{2}}$. Therefore, the left side of inequality (\ref{equ2}) is
    $ \vert \omega_{q}(T)-\omega(T)\vert=\frac{1}{8}\sqrt{27+18q-13q^2+(9+7q)\sqrt{(1-q)(9+7q)}}-\frac{1}{\sqrt{2}}.$ Figure \ref{Fig-4} compares  $ \vert \omega_{q}(T)-\omega(T)\vert$ and the upper bound $\sqrt{2(1-q)}$.  
    
   \begin{figure}[H]
\centering
{\includegraphics[scale=.58]{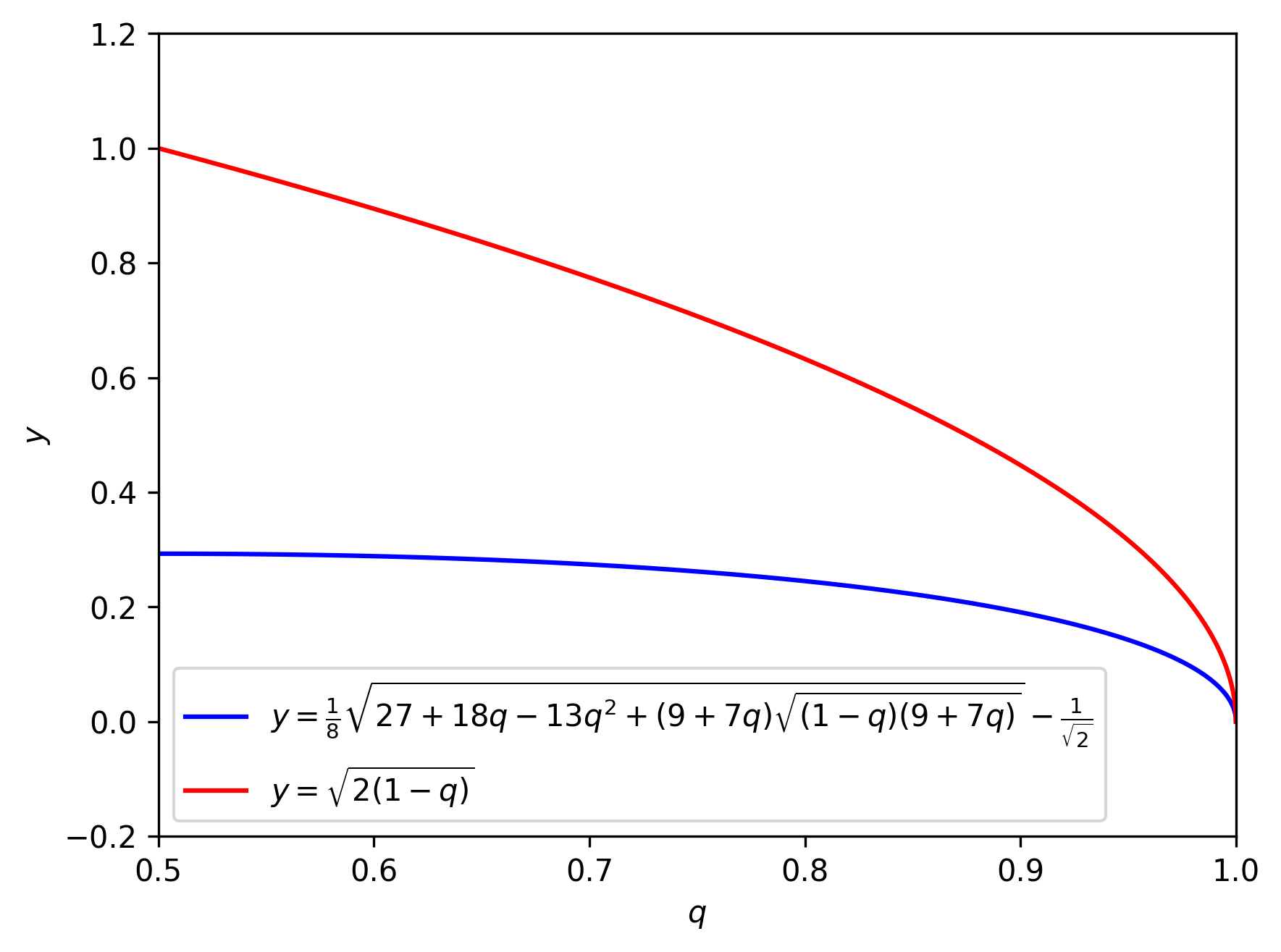}}
\caption{Comparision of $\omega_q(T) $ with the upper bound \eqref{equ2} for Example \ref{Ex_4}. }	 
\label{Fig-4}
\end{figure}
\end{example}

\begin{theorem}
\label{thm5}
For any $ T\in L^{A}(H) $ and $ q\in \mathbb{C}, \ 0<\vert q \vert\leq 1, $ the following are true: \\
(1) $ \vert c_{A,q}(T)-c_{A}(T)\vert\leq \sqrt{2(1-Req)}\Vert T \Vert_{A}. $ \\
(2) If $ q_{n}\in \mathbb{C}, \ 0<\vert q_{n} \vert \leq 1 $ and $ Re q_{n}\rightarrow 1, \ n\rightarrow \infty, $ then 
$$
\lim\limits_{n\rightarrow \infty}c_{A,q_{n}}(T)=c_{A}(T).
$$
(3) For any $ T,S\in L^{A}(H) $ and $ q\in \mathbb{C}, \ 0<\vert q \vert\leq 1, $ we have
$$ 
\vert c_{A,q}(T)-c_{A,q}(S)\vert\leq \omega_{q}(T\pm S).
$$
(4) If the operator sequence $ (T_{n})$ converges uniformly to an operator $ T $ in $ L^{A}(H), $ then
$$
\lim\limits_{n\rightarrow \infty}c_{A,q}(T_{n})=c_{A,q}(T).
$$
\end{theorem}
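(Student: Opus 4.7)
The plan is to follow the same template as Theorem \ref{thm4}, with the supremum replaced throughout by the infimum. For part (1), I would use the two triangle inequalities
\[
|\langle Tx,y\rangle_A|\le|\langle Tx,x\rangle_A|+\|Tx\|_A\|y-x\|_A,\qquad |\langle Tx,x\rangle_A|\le|\langle Tx,y\rangle_A|+\|Tx\|_A\|y-x\|_A
\]
that already appear in the proof of Theorem \ref{thm4}(1), combined with $\|y-x\|_A=\sqrt{2(1-\mathrm{Re}\,q)}$ from Theorem \ref{thm1}(6) and $\|Tx\|_A\le\|T\|_A$. The subtle point compared with the $\omega$-case is that $c_A(T)$ and $c_{A,q}(T)$ are infima over different constraint sets; to bridge them I would observe that every $x$ with $\|x\|_A=1$ can be completed to an $(A,q)$-admissible pair by choosing $y=qx+\sqrt{1-|q|^2}\,z$ with $z$ of unit $A$-seminorm and $A$-orthogonal to $x$, which is possible because $0<|q|\le 1$. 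Fixing $x$ and selecting such a $y$ in the first inequality, then taking the infimum over $x$, yields one direction; taking the infimum over admissible pairs $(x,y)$ in the second yields the other.

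Part (2) is then immediate, since $\mathrm{Re}\,q_n\to 1$ forces $\sqrt{2(1-\mathrm{Re}\,q_n)}\to 0$ in the bound from (1).

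For part (3), reading $\omega_q$ as $\omega_{A,q}$ in line with Theorem \ref{thm4}(3), plain subadditivity is unavailable because the Crawford number is an infimum, so I would run an $\varepsilon$-approximation argument. Given $\varepsilon>0$, pick an admissible pair $(x,y)$ with $|\langle Sx,y\rangle_A|<c_{A,q}(S)+\varepsilon$ and apply the triangle inequality to get
\[
c_{A,q}(T)\le|\langle Tx,y\rangle_A|\le|\langle Sx,y\rangle_A|+|\langle(T-S)x,y\rangle_A|<c_{A,q}(S)+\varepsilon+\omega_{A,q}(T-S).
\]
Letting $\varepsilon\to 0$ and swapping the roles of $T$ and $S$ gives $|c_{A,q}(T)-c_{A,q}(S)|\le\omega_{A,q}(T-S)$; the $\omega_{A,q}(T+S)$ version follows by replacing $S$ with $-S$ and using $c_{A,q}(-S)=c_{A,q}(S)$ together with $\omega_{A,q}(-S)=\omega_{A,q}(S)$. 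Part (4) is then a direct corollary of (3) and Theorem \ref{thm1}(1): $|c_{A,q}(T_n)-c_{A,q}(T)|\le\omega_{A,q}(T_n-T)\le\|T_n-T\|_A\to 0$.

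The main obstacle I foresee is the $\varepsilon$-approximation in (3); the $\omega_{A,q}$-analog in Theorem \ref{thm4}(3) collapsed to plain subadditivity, but the infimum forces a near-minimizer argument here. A secondary bookkeeping issue is verifying in (1) that every unit $x$ extends to an $(A,q)$-admissible pair, which depends on $|q|\le 1$ and on the semi-Hilbert space containing an $A$-orthogonal companion of positive $A$-seminorm to $x$.
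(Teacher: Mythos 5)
Your proposal is correct and follows essentially the same route as the paper: the same two triangle inequalities with $\Vert y-x\Vert_{A}=\sqrt{2(1-\mathrm{Re}\,q)}$ for (1), the consequence for (2), a Lipschitz-type bound by $\omega_{A,q}(T\pm S)$ for (3) (reading $\omega_{q}$ as $\omega_{A,q}$, as the paper's own proof does), and Theorem \ref{thm1}(1) for (4). The only divergences are cosmetic: in (3) the paper dispenses with the $\varepsilon$-near-minimizer by applying the reverse triangle inequality pointwise and bounding below by the constant $c_{A,q}(T+S)-\omega_{A,q}(S)$ before taking the infimum, while in (1) your explicit completion of a unit vector $x$ to an $(A,q)$-admissible pair (take $y=\overline{q}\,x+\sqrt{1-\vert q\vert^{2}}\,z$ to get $\langle x,y\rangle_{A}=q$) supplies a detail the paper leaves implicit.
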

\begin{proof}
 (1) For $ x, y \in H, $ $ \Vert x \Vert_{A}=\Vert y \Vert_{A}=1, \ \langle x,y \rangle_{A}=q, $ we have 
\begin{eqnarray}
\vert \langle Tx,y \rangle_{A}\vert & \geq & \vert \langle Tx,x \rangle_{A}+\langle Tx,y-x \rangle_{A}\vert  \nonumber \\
& = & \vert \langle Tx,x \rangle_{A} \vert - \vert \langle Tx,y-x \rangle_{A}\vert \nonumber \\
& = & \vert \langle Tx,x \rangle_{A} \vert - \vert \langle ATx,y-x \rangle\vert \nonumber \\
& \geq & \vert \langle Tx,x \rangle_{A} \vert - \vert \langle ATx,Tx \rangle\vert^{1/2}\langle A(y-x),y-x\rangle^{1/2} \nonumber \\
& = & \vert \langle Tx,x \rangle_{A} \vert - \Vert Tx \Vert_{A}\langle y-x,y-x\rangle^{1/2}_{A} \nonumber \\
& \geq & \vert \langle Tx,x \rangle_{A} \vert - \Vert T \Vert_{A} \sqrt{2}\sqrt{1-Req}. \nonumber
\end{eqnarray}
From this and from the definition of the Crawford number, we have
$$
c_{A,q}(T)\geq c_{A}(T)-\Vert T \Vert_{A} \sqrt{2}\sqrt{1-Req},
$$
that is,
$$
c_{A,q}(T)-c_{A}(T)\geq - \Vert T \Vert_{A} \sqrt{2}\sqrt{1-Req}.
$$
Similarly, by the previous method, it can be shown that 
$$
c_{A,q}(T)-c_{A}(T)\leq\Vert T \Vert_{A} \sqrt{2}\sqrt{1-Req}.
$$
Consequently, it is true
$$ 
\vert c_{A,q}(T)-c_{A}(T)\vert\leq \sqrt{2(1-Req)}\Vert T \Vert_{A}.
$$
(2) The validity of this result is clear from the first claim. \\
(3) For any $ x, y \in H, $ $ \Vert x \Vert_{A}=\Vert y \Vert_{A}=1, \ \langle x,y \rangle_{A}=q, \ q\in \mathbb{C}, \ 0<\vert q \vert\leq 1 $, we have
\begin{eqnarray}
\vert \langle Tx,y \rangle_{A} \vert & = & \vert \langle (T+S)x,y \rangle_{A} - \langle Sx,y \rangle_{A}  \vert \nonumber \\
& \geq & \vert \langle (T+S)x,y \rangle_{A}\vert - \vert\langle Sx,y \rangle_{A}  \vert . \nonumber
\end{eqnarray}
From the last relation, we have
$$
c_{A,q}(T)\geq c_{A,q}(T+S)-\omega_{A,q}(S),
$$
that is,
$$
c_{A,q}(T+S)-c_{A,q}(T)\leq\omega_{A,q}(S).
$$
In a similar way, it can be established that 
$$
-\omega_{A,q}(S)\leq c_{A,q}(T+S)-c_{A,q}(T).
$$
Consequently, from the previous results, we have
$$
\vert c_{A,q}(T+S)-c_{A,q}(T) \vert \leq \omega_{A,q}(S).
$$
Hence, it is clear that 
$$ 
\vert c_{A,q}(T)-c_{A,q}(S)\vert\leq \omega_{A,q}(T-S).
$$
(4) The validity of this result follows from the last claim and the inequality
$$
\omega_{A,q}(T_{n}-T)\leq \Vert T_{n}-T \Vert_{A}, \ n\geq 1.
$$
(See the first claim of Theorem \ref{thm1}.) 
\end{proof}
\begin{corollary}
\label{cor2}
For the operator sequence $ (T_{n}) $ in $ L^{A}(H), $ which  converges uniformly to an operator $ T\in L^{A}(H), $ the following are true:
$$
\lim\limits_{n\rightarrow\infty}g_{\omega_{A,q}}(T_{n})=g_{\omega_{A,q}}(T)
$$
and 
$$
\lim\limits_{n\rightarrow\infty}g_{c_{A,q}}(T_{n})=g_{c_{A,q}}(T) .
$$
\end{corollary}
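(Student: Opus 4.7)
The plan is to reduce both limits to three known convergences, all driven by the uniform convergence hypothesis $\|T_n - T\|_A \to 0$. By definition,
$$
g_{\omega_{A,q}}(T_n) - g_{\omega_{A,q}}(T) = \bigl(\|T_n\|_A - \|T\|_A\bigr) - \bigl(\omega_{A,q}(T_n) - \omega_{A,q}(T)\bigr),
$$
and similarly for $g_{c_{A,q}}$. So it suffices to prove each of the three pieces tends to zero.

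The two numerical-radius/Crawford-number pieces are already in hand: Theorem \ref{thm4}(3) gives $\omega_{A,q}(T_n) \to \omega_{A,q}(T)$, and Theorem \ref{thm5}(4) gives $c_{A,q}(T_n) \to c_{A,q}(T)$. For the semi-norm piece, I would invoke the reverse triangle inequality for $\|\cdot\|_A$, namely
$$
\bigl|\,\|T_n\|_A - \|T\|_A\,\bigr| \leq \|T_n - T\|_A,
$$
which holds because $\|\cdot\|_A$ is a semi-norm on $L^{A}(H)$ (subadditive and absolutely homogeneous). This immediately yields $\|T_n\|_A \to \|T\|_A$ from the uniform convergence assumption.

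Combining these three facts through the triangle inequality
$$
\bigl| g_{\omega_{A,q}}(T_n) - g_{\omega_{A,q}}(T) \bigr| \leq \bigl|\,\|T_n\|_A - \|T\|_A\,\bigr| + \bigl|\omega_{A,q}(T_n) - \omega_{A,q}(T)\bigr|,
$$
and an identical estimate with $c_{A,q}$ in place of $\omega_{A,q}$, both limits follow at once. There is no real obstacle here; the only mild point worth checking explicitly is the reverse triangle inequality for $\|\cdot\|_A$, which is immediate from subadditivity applied to $T_n = (T_n - T) + T$ and symmetrically. The whole corollary is therefore a short three-line consequence of Theorems \ref{thm4} and \ref{thm5}.
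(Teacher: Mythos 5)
Your proposal is correct and follows essentially the same route as the paper: the same decomposition of the gap into the semi-norm difference and the radius/Crawford-number difference, the reverse triangle inequality for $\Vert\cdot\Vert_{A}$, and the convergence results of Theorems \ref{thm4} and \ref{thm5}. The paper merely makes the estimate quantitative, bounding $\vert g_{\omega_{A,q}}(T_{n})-g_{\omega_{A,q}}(T)\vert$ by $2\Vert T_{n}-T\Vert_{A}$, but the substance is identical.
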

\begin{proof}
Indeed, for the gaps, we have
\begin{eqnarray}
\vert g_{\omega_{A,q}}(T_{n})-g_{\omega_{A,q}}(T)\vert & = & \vert \left( \Vert T_{n} \Vert_{A}- \omega_{A,q}(T_{n})\right) -\left( \Vert T \Vert_{A}-\omega_{A,q}(T)\right) \vert  \nonumber \\
& \leq & \Vert T_{n}-T \Vert_{A} + \vert \omega_{A,q}(T_{n}) -\omega_{A,q}(T)\vert \nonumber \\
& \leq & 2 \Vert T_{n}-T \Vert_{A}, \ n \geq 1 \nonumber
\end{eqnarray}
and 
\begin{eqnarray}
\vert g_{c_{{A,q}}}(T_{n})-g_{c_{{A,q}}}(T)\vert 
& \leq & \Vert T_{n}-T \Vert_{A} + \vert c_{A,q}(T_{n}) -c_{A,q}(T)\vert \nonumber \\
& \leq & 2 \Vert T_{n}-T \Vert_{A}, \ n \geq 1. \nonumber 
\end{eqnarray}
Since $ \Vert T_{n}-T \Vert_{A}\rightarrow 0, \ n\rightarrow\infty, $ then the validity of the corollary is clear.
\end{proof}
\setcounter{section}{2}
\section*{\normalsize{4. Some applications}}
In this section, some applications of our results will be given.  \\ \\
\noindent {\it \textbf{Application 1}}
Let $ H_{1} $ and $ H_{2} $ be two complex Hilbert spaces, $ A_{1}\in L(H_{1}), \  A_{2}\in L(H_{2}), \ A_{1}\geq 0, \ A_{2}\geq 0 $ and $ A=A_{1}\oplus A_{2}. $ Consider the following operator sequences 
$$
T_n= S_{n}\oplus M_{n}, \ T_{n}\in L^{A}\left( H_{1}\oplus H_{2} \right), \ n\geq 1.
$$
Assume that $ S_{n}\rightarrow S$ and $\ M_{n}\rightarrow M $  uniformly for some $ S $ and $ M $ in $ L^{A_{1}}(H_{1}) $ and $ L^{A_{2}}(H_{2}),  $ respectively. Also, let 
$$
T=S\oplus M.
$$
It is clear that $ \Vert T \Vert_{A}=\max \left\lbrace \Vert S \Vert_{A_{1}}, \Vert M \Vert_{A_{2}} \right\rbrace . $ \\
For any $ q\in \mathbb{C}, \ \vert q \vert \leq 1, $ it is clear that 
\begin{eqnarray}
\label{equ3}
W_{A_{1},q}(S)\subset W_{A,q}(T), \ W_{A_{2},q}(M)\subset W_{A,q}(T).
\end{eqnarray}
Therefore, $ \max \left\lbrace \omega_{A_{1},q}(S), \omega_{A_{2},q}(M) \right\rbrace \leq \omega_{A,q}(T).  $ Hence, it is easy to see that 
\begin{align}
 &g_{\omega_{A,q}}(T)  =  \Vert T \Vert_{A}-\omega_{A,q}(T)\nonumber \\
& \leq \max \left\lbrace \Vert S \Vert_{A_{1}}, \Vert M \Vert_{A_{2}} \right\rbrace - \max \left\lbrace \omega_{A_{1},q}(S), \omega_{A_{2},q}(M) \right\rbrace  \nonumber \\
& =  \left( \frac{\Vert S \Vert_{A_{1}}+\Vert M \Vert_{A_{2}}}{2} + \vert \frac{\Vert S \Vert_{A_{1}}-\Vert M \Vert_{A_{2}}}{2} \vert \right) - \left( \frac{\omega_{A_{1},q}(S)+ \omega_{A_{2},q}(M)}{2} + \vert \frac{\omega_{A_{1},q}(S)- \omega_{A_{2},q}(M)}{2} \vert \right) \nonumber \\
& \leq  \frac{\Vert S \Vert_{A_{1}} - \omega_{A_{1},q}(S)}{2}+ \frac{\Vert M \Vert_{A_{2}} - \omega_{A_{2},q}(M)}{2}+ \vert  \frac{\Vert S \Vert_{A_{1}}-\Vert M \Vert_{A_{2}}}{2} - \frac{\omega_{A_{1},q}(S)- \omega_{A_{2},q}(M)}{2} \vert \nonumber \\
& =  \frac{g_{\omega_{A_{1},q}}(S)+g_{\omega_{A_{2},q}}(M)}{2} + \vert \frac{g_{\omega_{A_{1},q}}(S)-g_{\omega_{A_{2},q}}(M)}{2} \vert \nonumber \\
& =  \max \left\lbrace g_{\omega_{A_{1},q}}(S),g_{\omega_{A_{2},q}}(M)  \right\rbrace  \nonumber   
\end{align}
for any $ q\in \mathbb{C}, \ \vert q \vert \leq 1. $
That is, for any $ q\in \mathbb{C}, \ \vert q \vert \leq 1 $, it is established that 
$$
g_{\omega_{A,q}}(T) \leq \max \left\lbrace g_{\omega_{A_{1},q}}(S),g_{\omega_{A_{2},q}}(M)  \right\rbrace .
$$
Then, from the Corollary \ref{cor2}, we have
$$
\lim\limits_{n\rightarrow \infty}g_{\omega_{A,q}}(T_{n}) \leq \max \left\lbrace g_{\omega_{A_{1},q}}(S),g_{\omega_{A_{2},q}}(M)  \right\rbrace .
$$
On the other hand, from (\ref{equ3}), it is true that
$$
c_{{A,q}}(T) \leq inf \left\lbrace c_{{A_{1},q}}(S),c_{{A_{2},q}}(M)  \right\rbrace 
$$
for any $ q\in \mathbb{C}, \ \vert q \vert \leq 1. $ Then,
\begin{align}
   & g_{c_{A,q}}(T)  =  \Vert T \Vert_{A}-c_{A,q}(T)\nonumber \\
& \geq  \max \left\lbrace \Vert S \Vert_{A_{1}}, \Vert M \Vert_{A_{2}} \right\rbrace - \inf \left\lbrace c_{A_{1},q}(S), c_{A_{2},q}(M) \right\rbrace  \nonumber \\
& =  \left( \frac{\Vert S \Vert_{A_{1}}+\Vert M \Vert_{A_{2}}}{2} + \vert \frac{\Vert S \Vert_{A_{1}}-\Vert M \Vert_{A_{2}}}{2} \vert \right) - \left( \frac{c_{A_{1},q}(S)+ c_{A_{2},q}(M)}{2} - \vert \frac{c_{A_{1},q}(S)- c_{A_{2},q}(M)}{2} \vert \right) \nonumber \\
& \geq  \frac{\Vert S \Vert_{A_{1}} - c_{A_{1},q}(S)}{2}+ \frac{\Vert M \Vert_{A_{2}} - c_{A_{2},q}(M)}{2}+ \vert  \frac{\Vert S \Vert_{A_{1}}-\Vert M \Vert_{A_{2}}}{2} - \frac{c_{A_{1},q}(S)- c_{A_{2},q}(M)}{2} \vert \nonumber \\
& =  \frac{g_{c_{A_{1},q}}(S)+g_{c_{A_{2},q}}(M)}{2} + \vert \frac{g_{c_{A_{1},q}}(S)-g_{c_{A_{2},q}}(M)}{2} \vert \nonumber \\
& =  \max \left\lbrace g_{c_{A_{1},q}}(S),g_{c_{A_{2},q}}(M)  \right\rbrace  \nonumber .
\end{align}

Then, by Corollary \ref{cor2}, we have
$$
\lim\limits_{n\rightarrow \infty}g_{c_{A,q}}(T_{n}) \geq \max \left\lbrace g_{c_{A_{1},q}}(S),g_{c_{A_{2},q}}(M) \right\rbrace 
$$
for any $ q\in \mathbb{C}, \ \vert q \vert \leq 1. $ \\ 
\noindent {\it \textbf{Application 2}}
Let $ \psi \in C[0,1], \ \psi \geq 0, \ \varphi_{n}\in C[0,1], \ n\geq 1, \ \varphi_{n}\rightarrow 1  $ uniformly on $ [0,1] $ and 
$$
Af(x)=\psi(x)f(x), \ f\in L_{2}(0,1),
$$
$$
A: L_{2}(0,1) \rightarrow L_{2}(0,1),
$$
$$
T_{n}f(x)=\varphi_{n}(x)f(x), \ f\in L_{2}^{\psi}(0,1),
$$
where,
$$
L_{2}^{\psi}(0,1)= \left\lbrace f:(0,1)\rightarrow \mathbb{C}: f \ \text{Lebesque  measurable function and}\ \int\limits_{0}^{1}\psi (x) \vert f(x) \vert^{2}dx<\infty \right\rbrace.  
$$ 
Then, the operator sequences $ T_{n}, \ n\geq 1, $ converges uniformly to the identity operator $ I $ in $ L_{2}^{\psi}(0,1). $ Hence, for any $ q\in \mathbb{C}, \ \vert q \vert \leq 1, $ by Corollary \ref{cor2}, we have
$$
\lim\limits_{n\rightarrow \infty}g_{\omega_{A,q}}(T_{n})=g_{\omega_{A,q}}(T)=1-\vert q \vert,
$$
$$
\lim\limits_{n\rightarrow \infty}g_{c_{A,q}}(T_{n})=g_{c_{A,q}}(T)=1-\vert q \vert .
$$
\\
\noindent {\it \textbf{Application 3}}
A prominent function class $ (\Lambda_{\theta})_{+} $ will be given (see \cite{Aleksandrov}). Let $ \theta $ be a modulus of continuity, i. e., $ \theta $ is a non-decreasing continuous function on $ [0,\infty) $ such that $ \theta (0)=0, \ \theta(x)>0 $ for $ x>0, $ and 
$$
\theta (x+y)\leq \theta (x)+ \theta(y), \ x,y\in [0,\infty).
$$
In this case, $ (\Lambda_{\theta})_{+} $ is defined as
$$
(\Lambda_{\theta})_{+} := \left\lbrace  f\in A(\mathbb{D}): \vert f \vert_{\Lambda_{\theta}}= \sup_{\substack{a,b\in\mathbb{D} \\ a\neq b}}\frac{\vert f(a)-f(b)\vert}{\theta(\vert a-b\vert)}<\infty\right\rbrace ,
$$
where $ \mathbb{D}:= \lbrace z\in \mathbb{C}: \vert z \vert<1 \rbrace $ is the unit disc and $ A(\mathbb{D}) $ is the class of analytic functions on $ \mathbb{D}. $ \\
Also, when $ \theta $ is given, $ \theta_{*} $ will be defined as
$$
\theta_{*}(x):=x \int\limits_{x}^{\infty}\frac{\theta(t)}{t^{2}}dt, \ x>0.
$$
Remark that $ \lim\limits_{x\rightarrow 0^{+}}\theta_{*}(x)=0. $
\begin{theorem} 
\cite{Aleksandrov} For every modulus of continuity $ \theta, $ for arbitrary contractions $ S $ and $ T, $ and $ f\in (\Lambda_{\theta})_{+}, $ there exists a constant $ c>0 $ such that 
$$
\Vert f(S)-f(T)\Vert\leq c \vert f \vert_{\Lambda_{\theta}}\theta_{*}(\Vert S-T \Vert).
$$
\end{theorem}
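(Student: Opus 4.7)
The plan is to proceed via a Littlewood--Paley style decomposition of $f$ into analytic polynomial blocks, estimate the contraction-difference $f(S) - f(T)$ block by block, and then recombine so that the final estimate is matched to the shape of the integral defining $\theta_*$. Concretely, write $f = \sum_{n \ge 0} f_n$, where $f_n$ is the Fourier block whose Taylor coefficients lie in the dyadic range $[2^{n-1}, 2^n)$ (with $f_0$ the constant term), so that each $f_n$ is an analytic polynomial of degree at most $2^n$. Using a convolution with a smooth kernel supported in this dyadic shell, together with the hypothesis $|f(a) - f(b)| \le |f|_{\Lambda_\theta}\,\theta(|a - b|)$ on $\mathbb{D}$, one derives a Jackson--Bernstein type bound
$$
\|f_n\|_{H^\infty(\mathbb{D})} \le C_1\,|f|_{\Lambda_\theta}\,\theta(2^{-n}).
$$

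For each block, I would combine two estimates. Von Neumann's inequality, applied to the contractions $S$ and $T$ separately, gives the coarse bound $\|f_n(S) - f_n(T)\| \le 2\|f_n\|_\infty$. The sharper, competing bound
$$
\|f_n(S) - f_n(T)\| \le C_2\, 2^n\,\|f_n\|_\infty\,\|S-T\|
$$
for polynomials of degree at most $2^n$ is the crucial ingredient. One route is via the Sz.-Nagy unitary dilation, which reduces matters to the case where $S, T$ are unitary, followed by the Birman--Solomyak--Peller double operator integral representation
$$
f_n(U) - f_n(V) = \iint \frac{f_n(\zeta) - f_n(w)}{\zeta - w}\,dE_U(\zeta)\,(U - V)\,dE_V(w),
$$
together with a Schur multiplier estimate of degree-linear order for the divided difference of a polynomial of degree at most $2^n$.

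To conclude, set $\delta = \|S-T\|$ and let $N$ be the integer with $2^{-N-1} \le \delta < 2^{-N}$. Using the sharp bound for $n \le N$ and the trivial bound for $n > N$, I would arrive at
$$
\|f(S) - f(T)\| \le C\,|f|_{\Lambda_\theta}\!\left(\delta\sum_{n=0}^{N} 2^n \theta(2^{-n}) + \sum_{n > N}\theta(2^{-n})\right),
$$
and compare both sums to $\delta\int_\delta^\infty \theta(t)/t^2\,dt = \theta_*(\delta)$ via a dyadic Riemann-sum comparison, exploiting the subadditivity of $\theta$ (equivalently, the monotonicity of $\theta(t)/t$). The main obstacle is the middle step: delivering the clean polynomial difference bound with degree-\emph{linear} (rather than degree-squared) scaling for contractions, which is exactly where the unitary dilation and double operator integral machinery becomes indispensable; the decomposition and final summation are then essentially bookkeeping.
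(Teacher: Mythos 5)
This theorem is not proved in the paper at all: it is imported verbatim from the cited work of Aleksandrov and Peller, so there is no in-paper argument to measure yours against. Your reconstruction does follow the architecture of the proof in that source --- dyadic analytic (de la Vall\'ee Poussin) blocks, the Jackson-type bound $\|f_n\|_\infty\lesssim |f|_{\Lambda_\theta}\theta(2^{-n})$, a degree-linear operator Lipschitz bound for polynomial blocks, and a two-regime summation --- and you correctly isolate the degree-linear polynomial estimate as the crux. However, two of your steps do not go through as described. The more serious one is the final summation: you bound the high-frequency contribution by $\sum_{n>N}\theta(2^{-n})$ and compare it to $\theta_*(\delta)$, but for a general modulus of continuity this comparison fails --- take $\theta$ concave with $\theta(2^{-n})=1/(n+1)$ and $\theta\equiv 1$ on $[1,\infty)$; then $\sum_{n>N}\theta(2^{-n})=\infty$ while $\theta_*(2^{-N})\asymp 1/N$ is finite. (Your scheme is fine for $\theta(t)=t^\alpha$, where the tail is geometric, but the theorem is asserted for every modulus of continuity.) The repair is to refuse to sum the tail block by block: treat $g=f-f*V_N=\sum_{n>N}f_n$ as a single function, apply von Neumann's inequality to get $\|g(S)-g(T)\|\le 2\|g\|_\infty$, and use the Jackson estimate $\|f-f*V_N\|_\infty\lesssim|f|_{\Lambda_\theta}\theta(2^{-N})$ together with $\theta(\delta)\le 2\theta_*(\delta)$, which follows from $\theta_*(\delta)\ge\delta\int_\delta^{2\delta}\theta(t)t^{-2}\,dt\ge\theta(\delta)/2$. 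Your treatment of the low-frequency sum $\delta\sum_{n\le N}2^n\theta(2^{-n})\lesssim\theta_*(\delta)$ is correct.

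The second gap is the reduction of the key polynomial bound to unitaries via Sz.-Nagy dilation. To transfer the unitary estimate you would need unitary dilations $U_S,U_T$ with $\|U_S-U_T\|\lesssim\|S-T\|$, but the Sz.-Nagy--Sch\"affer construction involves the defect operators $(I-T^*T)^{1/2}$, and the square root is only H\"older of order $1/2$ in $T$, so no Lipschitz control of the dilations is available. In the cited source the double operator integral is formed directly with respect to the semi-spectral measures of $S$ and $T$ on the circle (the compressions of the spectral measures of their separate dilations), and the $O(2^n\|f_n\|_\infty)$ Schur-multiplier bound for the divided difference of an analytic polynomial of degree at most $2^n$ is applied in that setting; no comparison between the two dilations is ever needed. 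You are right that this step is where the real work lies: the naive telescoping $S^j-T^j=\sum_k S^k(S-T)T^{j-1-k}$ only yields a bound of order $n\log n$ in the degree, because the resulting tails $\sum_{j>k}a_jz^{j-1-k}$ are Riesz projections of $\bar z^{\,k+1}f$ and the Riesz projection is unbounded on $H^\infty$, so the Schur-multiplier machinery cannot be avoided.
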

Now, we will examine how the results in this section vary for operator functions.
\begin{theorem}
Let $ (T_{n}), \ n\geq 1 $ in $ L^{A}(H) $ be a sequence of contraction operators, which converges uniformly to an operator $ T\in L^{A}(H) $ and $ f\in (\Lambda_{\theta})_{+}. $ In this case,
$$
\omega_{A,q}(f(T))=\lim\limits_{n\rightarrow\infty}\omega_{A,q}(f(T_{n})), \ q\in \mathbb{C}, \ 0<\vert q \vert\leq 1,
$$
$$
c_{A,q}(f(T))=\lim\limits_{n\rightarrow\infty}c_{A,q}(f(T_{n})), \ q\in \mathbb{C}, \ 0<\vert q \vert\leq 1
$$
are true.
\end{theorem}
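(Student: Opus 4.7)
The plan is to reduce both convergence statements to the single statement that $f(T_n)\to f(T)$ $A$-uniformly in $L^A(H)$, and then to invoke the Aleksandrov-type estimate cited just above the theorem.

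First, I would reuse the Lipschitz-type inequalities that are essentially already contained in the proofs of Theorem~\ref{thm4}(3) and Theorem~\ref{thm5}(4): for any $S,R\in L^A(H)$ and $q\in\mathbb{C}$ with $0<|q|\leq 1$, the subadditivity of $\omega_{A,q}(\cdot)$ together with Theorem~\ref{thm1}(1) yields
$$|\omega_{A,q}(S)-\omega_{A,q}(R)|\leq \omega_{A,q}(S-R)\leq \|S-R\|_A,$$
and the bound established in the proof of Theorem~\ref{thm5}(3)--(4) gives the analogous inequality
$$|c_{A,q}(S)-c_{A,q}(R)|\leq \omega_{A,q}(S-R)\leq \|S-R\|_A.$$
Substituting $S=f(T_n)$ and $R=f(T)$, both desired limits follow at once provided I can show $\|f(T_n)-f(T)\|_A\to 0$ as $n\to\infty$.

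Second, since each $T_n$ and $T$ is a contraction and $f\in(\Lambda_\theta)_+$, I would apply the Aleksandrov theorem (read in the $A$-semi-norm setting) to obtain a constant $c>0$, independent of $n$, with
$$\|f(T_n)-f(T)\|_A\leq c\,|f|_{\Lambda_\theta}\,\theta_*\bigl(\|T_n-T\|_A\bigr),\qquad n\geq 1.$$
The hypothesis $T_n\to T$ $A$-uniformly gives $\|T_n-T\|_A\to 0$, and since $\lim_{x\to 0^+}\theta_*(x)=0$ (noted explicitly in the excerpt), the right-hand side tends to $0$. Combining this with the Lipschitz bounds of the first step yields simultaneously
$$\omega_{A,q}(f(T_n))\to\omega_{A,q}(f(T))\quad\text{and}\quad c_{A,q}(f(T_n))\to c_{A,q}(f(T)).$$

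The main technical obstacle will be the second step: justifying that the Aleksandrov inequality continues to hold with the $A$-semi-norm $\|\cdot\|_A$ in place of the standard operator norm. This requires that the analytic functional calculus for $A$-contractions be compatible with the semi-inner product $\langle\cdot,\cdot\rangle_A$, so that Aleksandrov's proof transfers verbatim to the semi-Hilbert setting. Once this compatibility is verified (or, equivalently, once an $A$-version of the inequality is stated separately), the remainder of the argument is the routine chain displayed above.
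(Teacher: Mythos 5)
Your proposal follows essentially the same route as the paper: establish $\|f(T_n)-f(T)\|\to 0$ via the Aleksandrov estimate $\|f(T_n)-f(T)\|\leq c\,|f|_{\Lambda_\theta}\,\theta_*(\|T_n-T\|)$ and then conclude by the Lipschitz continuity of $\omega_{A,q}$ and $c_{A,q}$ from Theorems \ref{thm4} and \ref{thm5}. The norm-compatibility issue you flag in your last paragraph is genuine and is in fact glossed over in the paper itself, which invokes the Aleksandrov bound in the ordinary operator norm while the hypothesis ($\|T_n-T\|_A\to 0$) and the needed conclusion ($\|f(T_n)-f(T)\|_A\to 0$) are phrased in the semi-norm $\|\cdot\|_A$; so your caveat makes your write-up, if anything, more careful than the published argument.
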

\begin{proof}
Let $ (T_{n}), \ n\geq 1 $ in $ L^{A}(H) $ be a sequence of contraction operators, which  converges uniformly to an operator $ T\in L^{A}(H) $ and $ f\in (\Lambda_{\theta})_{+}. $ In this situation, $ T $ is a contraction operator. Also, since $ f\in (\Lambda_{\theta})_{+}, $ then there exists $ c>0 $ such that $ \Vert f(T)-f(T_{n})\Vert\leq c \vert f \vert_{\Lambda_{\theta}}\theta_{*}(\Vert T-T_{n} \Vert), \ n\geq 1, $ by Theorem \ref{thm3}. Thus, since $ \lim\limits_{x\rightarrow 0^{+}}\theta_{*}(\Vert T-T_{n} \Vert)=0, $ then the operator sequence $ (f(T_{n})) $  converges uniformly to $ f(T). $ Therefore, the validity of the assertions of this theorem under the corresponding conditions is obvious from Theorems \ref{thm4} and \ref{thm5}. 
\end{proof}

\textbf{Conflict of interest:} The authors state that there is no conflict of interest.

\textbf{Data availability:} Data sharing not applicable to the present paper as no
data sets were generated or analyzed during the current study.

\textbf{Funding:} Not applicable.

\end{document}